\newtheorem{thm}[equation]{Theorem}
\newtheorem{prop}[equation]{Proposition}
\newtheorem{lemma}[equation]{Lemma}
\newtheorem{cor}[equation]{Corollary}
\newtheorem{mainthm}{Theorem}
\theoremstyle{definition}
\newtheorem{defi}[equation]{Definition}
\theoremstyle{remark}
\newtheorem{rmk}[equation]{\bf Remark}
\renewcommand{\theequation}{\thesection.\ifnum\value{subsection}=0 1\else \arabic{subsection}\fi.\arabic{equation}}
\DeclareMathAlphabet{\mathcal}{OMS}{zplm}{m}{n}
\newcommand{\pssymb}{{\mathcal{PS}^s}}
\newcommand{\dcsymb}{{\mathcal{DC}^s}}
\newcommand{\ddsymb}{{\mathcal{DD}^s}}
\newcommand{\vs}{{\mathcal{B}^s_{\infty, \infty}}}
\newcommand{\bs}{{\mathcal{B}^{-s}_{1,1}}}
\definecolor{aliceblue}{rgb}{0.92, 0.93, 1.0}
\DeclareMathOperator*{\esssup}{ess\,sup}
\newcounter{change}
\begin{document}

%%
%% The title of the paper goes here.  Edit to your title.
%%

%\title{Besov spaces of distributions as particle systems}
\title{Particle systems, Dipoles and Besov spaces of distributions}

%%
%% Now edit the following to give your name and address:
%% 

 % Delete if not wanted.

%%
%% If there is another author uncomment and edit the following.
%%
\ \\

\author{Mateus Marra}
\email{mateus-marra@usp.br}
\author{Pedro Morelli}
\email{pedromorelli@usp.br}
\author{Daniel Smania}
\email{smania@icmc.usp.br}
\urladdr{https://sites.icmc.usp.br/smania/}

\address{Departamento de Matemática, Instituto de Ciências Matemáticas e de Computação (ICMC), Universidade de São Paulo (USP), Avenida Trabalhador São-carlense, 400,  
São Carlos, SP, CEP 13566-590, Brazil}

%%
%% If there are three of more authors they are added in the obvious
%% way. 
%%

%%%
%%% The following is for the abstract.  The abstract is optional and
%%% if not used just delete, or comment out, the following.
%%%

\begin{abstract}  We define distributions on an abstract measure space endowed with a 
sequence of partitions, and introduce analogues of Besov spaces with 
negative smoothness in this setting. 
In particular, we describe these spaces of distributions using 
unconditional Schauder bases consisting either of Haar wavelets or of 
pairs of Dirac masses (dipoles). 
This framework allows us to obtain duality results between Besov spaces 
of negative smoothness and H\"older spaces of functions with respect to 
an appropriately defined pseudo-metric.
\end{abstract}

%%
%%  LaTeX will not make the title for the paper unless told to do so.
%%  This is done by uncommenting the following.
%%

\subjclass[2020]{ 43A85, 43A15, 	46E36, 	46F99 }
%%
%%  LaTeX will not make the title for the paper unless told to do so.
%%  This is done by uncommenting the following.
%%
\keywords{Besov space, distributions, dipoles,  system of particles, particles, dyadic harmonic analysis, Haar wavelets, atomic decomposition}

 \maketitle

%%
%% LaTeX can automatically make a table of contents.  This is done by
%% uncommenting the following:
%%
\setcounter{tocdepth}{1}
\tableofcontents

%%
%%  To enter text is easy.  Just type it.  A blank line starts a new
%%  paragraph. 
%%

%%
%% A new section is started as follows:
%%

%%%%%%%%%%%%%%%%%%%%%%%%%%%%%%%%%%%%%%%%%%%%%%%%%%%%%%%%%%%%%%%%%%%%%%
\section{Introduction}
%%%%%%%%%%%%%%%%%%%%%%%%%%%%%%%%%%%%%%%%%%%%%%%%%%%%%%%%%%%%%%%%%%%%%%

Since their introduction \cite{besov_original}, Besov spaces have become a fundamental object in functional analysis due to their ability to precisely characterize the regularity of functions,  making them useful in various applications.

In particular, Besov spaces with negative smoothness are spaces of 
\emph{distributions}, and they play an important role. 
The theory of Besov spaces of distributions is very well developed 
in $\mathbb{R}^n$ and on manifolds. 
However, in an abstract setting such as a general measure space, 
the classical theory of distributions is not available, 
since the usual spaces of test functions, -spaces of 
$C^\infty$ functions, are not even defined there.

\emph{Our goal is to develop and study Besov spaces with negative smoothness 
in a highly irregular setting: measure spaces endowed with a certain sequence 
of partitions (a good grid).}

%  A notable feature of this scale of spaces is that very often it's possible to identify unconditional bases formed by sets of more elementary nature to describe them. In such descriptions, each function in the space can be represented as a convergent series in terms of these basic elements called \textit{atoms} and the Besov norm corresponds to a weighted norm on the sequence of coefficients. This approach based on unconditional bases is known as \textit{atomic decomposition}.
  
An unconditional Schauder basis consisting of unbalanced Haar wavelets, 
as defined by Girardi and Sweldens \cite{Sweldens1997}, 
will be an essential tool here. The idea of decomposing spaces of functions or distributions using simple 
building blocks is, of course, central in harmonic analysis: Fourier series, 
Schauder bases, wavelets, and atomic decompositions are well-known 
manifestations of this principle. 
See, for instance, Wilson's decomposition of Hardy spaces \cite{wilson_hardy}, 
the decomposition of classical Besov spaces using Souza's atoms by de Souza 
\cite{souza-atomicdec, souza2}, and the influential work of 
Frazier and Jawerth \cite{frazier}.  

However, developing harmonic analysis in settings such as fractals 
(Yang \cite{yangdachun_fractals}), homogeneous spaces (Han,  Lu, and  Yang   \cite{hanluyang} and Han,  Han,  He, Li and Pereyra \cite{per2}), quasi-metric spaces (Kairema, Li, Pereyra, Ward \cite{per}), 
or  measure spaces endowed with good grids S. \cite{smania_2022-PDE}, 
where classical analytic tools are not available, has proved to be difficult. 
In these environments, it seems that atomic decompositions and Haar wavelets 
are particularly suitable tools.

A significant result we obtain, which to the best of our knowledge is new, 
is that Besov spaces with negative smoothness admit an unconditional 
Schauder basis consisting of \emph{dipoles}, that is, distributions of the 
form $\delta_x - \delta_y$, where $\delta_x$ denotes the Dirac mass at $x$.  

Our results also yield duality statements for Besov spaces that parallel 
those in the classical setting of $\mathbb{R}^n$. 
In particular, we relate the dual of H\"older continuous functions 
(with respect to a certain pseudo-metric) to Besov spaces with negative 
smoothness.  

It is worth mentioning that, in our setting, the proofs are remarkably 
elementary.

The minimal structural requirements we impose on the underlying measure 
space make the resulting theory particularly useful when the space is 
highly irregular. 
For instance, we are primarily interested in using Besov spaces of 
distributions to study the ergodic theory of dynamical systems through 
the action of transfer operators on these spaces. 
M. and S.\ \cite{aniso} used this framework to analyze the action of 
transfer operators on anisotropic spaces of distributions defined on 
symbolic spaces (such as $\{0,1\}^{\mathbb{Z}}$ endowed with a 
Bernoulli measure).

%An atomic decomposition for  a Banach space  enhance the toolkit available to study it.

\section*{Acknowledgements} 
M.M. was supported by CAPES-Brazil. P.M. was supported by FAPESP-Brazil   2022/05300-1. D.S.  was financed  by the S\~ao Paulo Research Foundation (FAPESP), Brasil, Process Number 2017/06463-3,  and Bolsa de Produtividade em Pesquisa CNPq-Brazil 311916/2023-6.

\section{Main results and plan of the paper}
\noindent

The main results of this work fit into a broader trend of developing 
harmonic analysis on phase spaces with very low regularity. 
The minimal structure we require is a measure space endowed with a 
\emph{good grid}, that is, a sequence of partitions satisfying mild 
assumptions (see Section~3 and also S. \cite{smania_2022-PDE}).

We define a scale of Besov spaces of \textbf{distributions} 
(that is, with negative smoothness) $B^{-s}_{1,1}$ and Banach spaces of functions $\vs$, with $s> 0$.  To this end, we view distributions as formal sums of unbalanced Haar 
wavelets and define the norm in terms of the coefficients of this 
representation. 
Of course, this is closely related to the classical methods of dyadic 
harmonic analysis (see Pereyra \cite{dya2, pereyra} and also 
L\'opez-S\'anchez, Martell, and Parcet \cite{dya}). The relation between  $B^{-s}_{1,1}$ and   $\vs$ is similar to the classical setting.

\begin{mainthm}\label{thmA}$\vs$ and $\bs$ are Banach spaces such that
\begin{enumerate}[label=\Roman*.]
    \item \label{thma1} There is a pseudo-metric  $d$ on $I$  such that the  space $\vs$, with $0< s< 1$, is the space of $s$-H\"older continuous functions $\mathcal{C}^s$  with respect to $d$.

    \item \label{thma2} The dual of  $\bs$  is $\vs$. Furthermore, there is a separable subspace of $\vs$ whose dual equals $\mathcal{B}^{-s}_{1,1}$.
\end{enumerate} 
\end{mainthm}

  This is done in  and  Section \ref{besov_infty}.  Next  we show that  Besov spaces with negative smoothness  have atomic  decomposition using  {\it Dirac masses and dipoles.}  This is done in   sections \ref{sec_ps},  \ref{sec:dipolebasis} and \ref{sec:diracdipole}. Those Dirac masses $\delta_x$ are analogous to Dirac measures in the classical setting and  \textit{dipoles} are distributions  of the form
\[
\delta_{x}-\delta_{y}.
\]
We can also consider $\pssymb$ as the closed subspace of distributions in $(\mathcal{C}^s)^\star$  that are limits of linear combinations of Dirac masses. Those are called {\it particle systems}.  Perhaps surprisingly 

%Unfortunately, the usual notion of Dirac mass is not suited for this context so it require a proper definition and that's how the grid structure proves handy. It provides with a way of representing $I$ as a binary tree and the basic idea is to represent the distributions that play the role of Dirac masses by its \textit{branches} (see Definition \ref{dirac-defi}). 

\begin{mainthm} \label{thm5}  Let $d$ be the pseudo-metric  as in  Theorem \ref{thmA}. We have 
\begin{enumerate}[label=\Roman*.]
    \item \label{thmb1} $\mathcal{B}^{-s}_{1,1}=\pssymb$ with equivalence of norms. 
    
    \item \label{thmb2} There is a unconditional Schauder basis of  $\mathcal{B}^{-s}_{1,1}$ that contains only dipoles and a single Dirac mass.

    \item \label{thmb3} Given $\psi \in \mathcal{B}^{-s}_{1,1}$ define
$$\|\psi\|_{\ddsymb}=\inf \sum_{i=0}^\infty |c_i|  +\sum_{j=0}^\infty |b_j|\hat{d}(x_j,y_j)^s,$$
where the infimum runs over all possible representations of $\psi$ of the form 
\begin{equation} \psi = \sum_{i=0}^\infty c_i\delta_{z_i} +\sum_{j=0}^\infty b_j (\delta_{y_j}-\delta_{x_j}).\end{equation}
Then $\|\cdot\|_{\ddsymb}$ and $\|\cdot\|_{\mathcal{B}^{-s}_{1,1}}$ are equivalent.
\end{enumerate}
\end{mainthm}

The {\it metric}  $\hat{d}$ is closely related with the pseudo-metric $d$, and  we postpone  its definition to Section \ref{sec_ps}.

%Theorem B provides a useful toolbox for applications where an intermediate step involves understanding the dynamics of a linear operator acting on Dirac masses. Knowing in advance that $\bs$ can be reconstructed from such objects naturally suggests an appropriate Banach space for studying this class of operators in exotic settings. Moreover, If $\mathcal{F}$ is one of the unconditional basis given by the second item, of course there is $C>0$ such that
%\[
%C^{-1}\|\cdot\|_{\mathcal{F}}\leq\|\cdot\|_\bs \leq C \|\cdot\|_\mathcal{F},
%\]
%where $\|\cdot\|_\mathcal{F}$ is the weighted norm induced by $\mathcal{F}$. Surprisingly, from the proof presented in this work, one has that $C$ in \textit{independent} of $\mathcal{F}$.

\section{Preliminaries}
\label{preliminaries}

%endowed with a good grid $\mathcal{P}=(\mathcal{P}^k)$ and the atoms considered are maps %supported on some element $Q\in \mathcal{P}^k$, for some $k\geq 0$. We may also use $\mu$ to represent the reference measure. 
\subsection{Good Grids} A measure space with a \textit{good grid} is a  probability space $(I, \mathcal{A}, \mu)$ endowed with a nested sequence of finite $\mathcal{A}$-measurable partitions (up to zero measure sets)  $\mathcal{P} = (\mathcal{P}^k)_{k\in \mathbb{N}}$ such that
    \begin{enumerate}
        \item $\mathcal{P}^0 = \{I\}$.
        \item Given $Q\in \mathcal{P}^k$, $k>0$, then $Q\subset P$, for some $P\in \mathcal{P}^{k-1}$
        \item There exists $\lambda, \lambda_* \in (0,1)$ such that, if $Q\subset P$, with $Q\in \mathcal{P}^{k+1}$ and $P\in \mathcal{P}^{k}$, we have 
        \[
        \lambda_* \leq \dfrac{|Q|}{|P|} \leq \lambda.
        \]
        \item  The $\sigma$-algebra $\mathcal{A}$ is generated by $\cup_k \mathcal{P}^k$.
    \end{enumerate}
    The assumptions required above  are very mild, ensuring that a broad class of phase spaces falls within its framework (see, for instance the first section of \cite{Smania_2020}).

\begin{rmk}
    In property 2, $Q$ is known as a \textit{children} of $P$ and $\mathcal{P}$ the \textit{father} of $Q$. We denote by $\Omega_P$ the set of all children of $P$. If $\mathcal{P}$ is a good grid, then 
    \begin{equation}
    \label{def-cgr}
    C_{GR}:=\sup_{P \in \mathcal{P}} \# \Omega_P < \infty.
    \end{equation}
    %Indeed, if $\Omega_P=\{Q_1, \ldots , Q_n\}$, then $1=(|Q_1|+\ldots + |Q_n|)/P$ but, by property $3$,
    %\[
   % \lambda_* \leq \dfrac{1}{n} \leq \lambda,
    %\]
    %so $n$ can't go to infinity. We may write $P \in \mathcal{P}$ to say that $P \in \mathcal{P}^k$, for some $k \geq 0$.
\end{rmk}

\subsection{Unbalanced Haar wavelets.}

Girardi and Sweldens \cite{Sweldens1997} constructed a unconditional basis of $L^p$, for $1< p< \infty$, associated to a sequence of partitions on a measure space, similar to the classical Haar basis associated to the sequence of dyadic partition of $[0,1]$.  We briefly recall their construction.   Let 
\[
\mathcal{H}_Q = \bigcup\limits_{j=0}^\infty \mathcal{H}_{Q,j},
\]
be a collection of pairs $(A,B)$, where $A\subset \Omega_Q$, $B\subset \Omega_Q$, and $\cup A$ and $\cup B$ are disjoint, such that $\mathcal{H}_{Q,j}$
are  defined recursively in the following way. Choose an total order  
$$P_1^Q \preceq P_1^Q   \preceq \cdots \preceq P_{n_Q}^Q$$
of $\Omega_Q$ and define 
$$\mathcal{H}_{Q,0}=(\{P_1^Q,\ldots, P^{Q}_{[n_Q/2]}\}, \{P^{Q}_{[n_Q/2]+1}, P^{Q}_{n_Q}\})$$
and for every pair
\[
(S_1,S_2) = (\{S_1^1, \ldots, S_{n_1}^1\},\{S_1^2, \ldots, S_{n_2}^2\}) \in \mathcal{H}_{Q,j},
\]
where 
$$S_1^1 \preceq \cdots \preceq S^1_{n_1} \preceq S^2_1 \preceq \cdots \preceq S_{n_2}^2,$$
we add the pairs
\[
\begin{array}{ccc}
     (\{S_1^1,\ldots,S^1_{[n_1/2]}\},\{S^1_{[n_1/2]+1},\ldots, S_{n_1}^1\})  \\ \\
     (\{S_1^2,\ldots,S^2_{[n_2/2]}\},\{S^2_{[n_2/2]+1},\ldots, S_{n_2}^2\})
\end{array}
\]
to $\mathcal{H}_{Q,{j+1}}$. 
%and write, for $i=1,2$, $\Lambda_{R_i} = \{R_1^i,\ldots,R^i_{[n_i/2]}\},\{R^i_{[n_i/2+1]},\ldots, R_{n_i}^i\}$. 
Since the $\mathcal{P}$ is a good grid, there are only finitely many $j\in \mathbb{N}$ for which $\mathcal{H}_{Q,j}\neq \emptyset$.

\begin{defi}[Girardi and Sweldens \cite{Sweldens1997} ]  Define the {\it unbalanced Haar wavelet} associated to the pair  $(S_1,S_2)\in \mathcal{H}_Q$ as 
    \begin{equation}
    \label{haar-atom-supp-Q}
    \begin{array}{lll}
         \phi_{(S_1,S_2)} &=& \left(\dfrac{\sum\limits_{P\in S_1}{{1}}_P}{\sum\limits_{P\in S_1}|P|}-\dfrac{\sum\limits_{R\in S_2}{{1}}_R}{\sum\limits_{R\in S_2}|R|}\right).
    \end{array}
    \end{equation}
\end{defi}
Note that the set formed by all unbalanced Haar wavelets and the characteristic function of $I$ is an orthogonal set of function on $L^2(\mu)$.

\section{Besov spaces $\vs$ and  $\bs$  through Haar wavelets}
\label{besov_infty}

\subsection{Besov spaces of functions} Consider  a probability space $(I,\mathcal{A},\mu)$, where $I$ is the phase space, $\mathcal{A}$ is its $\sigma$-algebra and $\mu$ its probability. We will often denote $\mu(A)$  by $|A|$. Let $\mathcal{P}$ be a good grid on $I$.

\begin{defi}
\label{beesseinfinitoinfinito}
$\vs$, with $s>0$,  is the space  of  all functions 
$\psi \in L^\infty(I)$ that can be written as
\begin{equation} \label{seq}
\psi = c_I 1_I+\sum\limits_{k=0}^\infty\sum\limits_{Q\in \mathcal{P}^k}\sum\limits_{{(S_1,S_2)}\in \mathcal{H}_Q}|Q|^{1+s}c_{{(S_1,S_2)}}\phi_{{(S_1,S_2)}},
\end{equation} 
where $c_I, c_{(S_1,S_2)} \in \mathbb{C}$, and
\[
\|\psi\|_\vs := |c_I|+ \sup\limits_{k\geq 0}\sup\limits_{Q\in \mathcal{P}^k}\sup\limits_{{(S_1,S_2)}\in \mathcal{H}_Q} |c_{{(S_1,S_2)}}|<\infty.
\]
\end{defi}

It is easy to see that the partial sums  of the right hand side of  (\ref{seq}) converges (exponentially fast ) to $\psi$ in $L^\infty(I)$. It follows  that 
\begin{align}\label{cc}  c_{(S_1,S_2)} =\frac{1}{K_{(S_1,S_2)} |Q|^s } \int  \psi \phi_{{(S_1,S_2)}} \ d\mu,\end{align} 
where
$$K_{(S_1,S_2)} = |Q| \left(\dfrac{1}{\sum\limits_{P\in S_1}|P|}+\dfrac{1}{\sum\limits_{R\in S_2}|R|}\right)$$
for every $(S_1,S_2)\in \mathcal{H}_Q$. Note that  $2\leq K_{(S_1,S_2)}\leq 1/\lambda_*$. In particular the representation (\ref{seq}) is unique and

\begin{prop} We have that $\|\cdot\|_\vs$
is a complete norm on $\vs$. The space  $\vs$ is continuously embedded on $L^\infty(I)$.
\end{prop}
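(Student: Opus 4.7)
The plan is to verify the norm axioms from the uniqueness formula (\ref{cc}), then prove the embedding into $L^\infty(I)$ by a single pointwise geometric-series estimate, and finally deduce completeness by a routine Cauchy-coefficient argument.

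The norm properties are essentially free: because each coefficient in (\ref{seq}) is given by the bounded linear formula (\ref{cc}), the map $\psi \mapsto |\psi|_\vs$ is well-defined, positively homogeneous, and subadditive (as a supremum of subadditive quantities). If $|\psi|_\vs = 0$, then every coefficient vanishes and the $L^\infty$-convergent series (\ref{seq}) forces $\psi = 0$.

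The main quantitative step is the pointwise bound. For $(S_1,S_2) \in \mathcal{H}_Q$ with $Q \in \mathcal{P}^k$, formula (\ref{haar-atom-supp-Q}) combined with the good-grid lower bound $|P| \geq \lambda_*|Q|$ for $P \in \Omega_Q$ gives $\|\phi_{(S_1,S_2)}\|_{L^\infty} \leq 1/(\lambda_*|Q|)$. Moreover $\#\mathcal{H}_Q \leq C_{GR}$, and, for each $x$, there is exactly one $Q_k(x) \in \mathcal{P}^k$ containing $x$, with $|Q_k(x)| \leq \lambda^k$ by iterating $|Q|/|P| \leq \lambda$. Plugging these into (\ref{seq}) gives
\[
|\psi(x)| \leq |c_I| + |\psi|_\vs \cdot \frac{C_{GR}}{\lambda_*} \sum_{k=0}^\infty |Q_k(x)|^s \leq \left(1 + \frac{C_{GR}}{\lambda_*(1-\lambda^s)}\right)|\psi|_\vs,
\]
the last series being geometric since $\lambda \in (0,1)$ and $s>0$. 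This simultaneously yields the continuous embedding $\vs \hookrightarrow L^\infty(I)$ and the absolute convergence of (\ref{seq}) for any bounded choice of coefficients.

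Completeness is then standard. Given a Cauchy sequence $(\psi_n) \subset \vs$, each scalar sequence $(c_I^n)$ and $(c^n_{(S_1,S_2)})$ is Cauchy in $\mathbb{C}$; call the limits $c_I$ and $c_{(S_1,S_2)}$, which satisfy $|c_I| + \sup |c_{(S_1,S_2)}| \leq \sup_n |\psi_n|_\vs < \infty$. Define $\psi$ by (\ref{seq}) with these limit coefficients; the estimate above shows the series converges in $L^\infty$, so $\psi \in \vs$. Passing $m \to \infty$ in $|c_I^n - c_I^m| + \sup |c^n_{(S_1,S_2)} - c^m_{(S_1,S_2)}| \leq |\psi_n - \psi_m|_\vs$ yields $|\psi_n - \psi|_\vs \to 0$. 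The only step requiring genuine input from the good-grid hypothesis is the pointwise estimate; every other ingredient is formal.
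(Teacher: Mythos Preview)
Your proof is correct. The paper states this proposition without proof, relying on the preceding remark that the partial sums of (\ref{seq}) converge exponentially fast in $L^\infty(I)$ and on the uniqueness formula (\ref{cc}); your argument supplies exactly the details the paper elides. The pointwise geometric estimate you give---using $\|\phi_{(S_1,S_2)}\|_{L^\infty}\le 1/(\lambda_*|Q|)$, $\#\mathcal{H}_Q\le C_{GR}$, and $|Q_k(x)|\le\lambda^k$---is the natural (and essentially the only) way to make the embedding quantitative, and the Cauchy-coefficient argument for completeness is the standard one once the space is identified, via (\ref{cc}), with a weighted $\ell^\infty$ of coefficients. One cosmetic remark: since $|\cdot|_\vs$ is defined as $|c_I|+\sup|c_{(S_1,S_2)}|$ rather than a single supremum, your phrase ``as a supremum of subadditive quantities'' should read ``as a sum of two subadditive quantities''; this does not affect the argument.
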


The definition of $\vs$ seems to be artificial, however we can see it as a  natural generalization of the Besov space $\vs[0,1]$. Indeed 

\begin{defi}\label{metric} Consider the 
pseudo-metric $d$ on $I$ defined by   $d(x,y)=|P|$ if there is $n$ and $P\in \mathcal{P}^n$ satisfying $x,y \in P\in \mathcal{P}^n$ but there are  $Q_1,Q_2\in \mathcal{P}^{n+1}$ with $x \in Q_1$, $y\in Q_2$ and $Q_1\neq Q_2$. Otherwise define $d(x,y)=0.$
\end{defi} 

\begin{prop}[Theorem A-\ref{thma1}]
A measurable function  $\psi$ is $s$-H\"older continuous almost everywhere on $I$, that is, there exists $C$ such that 
$$|\psi(x)-\psi(y)|\leq Cd(x,y)^s$$
for almost every $(x,y)\in I\times I$, if and only if $\psi \in \vs$. Furthermore the norm
$$\|\psi\|_{\mathcal{C}^s(I)} = \|\psi\|_{\infty} + \esssup_{(x,y)\in I\times I} \frac{|\psi(x)-\psi(y)|}{d(x,y)^s} 
$$
is equivalent to the norm $\|\cdot\|_\vs$.
\end{prop}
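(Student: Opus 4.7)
The plan is to prove both directions of the norm equivalence by exploiting the Haar-wavelet representation (\ref{seq}) together with the following structural fact: the pseudo-metric $d(x,y)$ equals the measure of the unique smallest atom $P \in \mathcal{P}^n$ containing both $x$ and $y$, with $x,y$ lying in distinct children of $P$.

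\emph{Direction $\psi \in \vs \Longrightarrow \psi \in C^s(I)$.} Given the expansion (\ref{seq}) with $M := \sup|c_{(S_1,S_2)}| \leq |\psi|_{\vs}$, fix $x, y$ with $d(x,y) = |P|$, $P \in \mathcal{P}^n$. I first observe that any wavelet $\phi_{(S_1,S_2)}$ attached to some $Q \in \mathcal{P}^k$ with $k < n$ satisfies $\phi_{(S_1,S_2)}(x) = \phi_{(S_1,S_2)}(y)$, because $Q \supsetneq P$ forces $x, y$ into the same child of $Q$. Hence only levels $k \geq n$ contribute, and for $k > n$ the relevant $Q$ lies in a single child of $P$ and thus contains at most one of $x, y$. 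Two ingredients then close the estimate: (a) the sup bound $|\phi_{(S_1,S_2)}|_\infty \leq 1/(\lambda_* |Q|)$, coming from $K_{(S_1,S_2)} \leq 1/\lambda_*$; and (b) the Girardi--Sweldens binary-tree construction of $\mathcal{H}_Q$, under which the wavelets in $\mathcal{H}_{Q,j}$ have pairwise disjoint supports in $Q$, so at a fixed point at most one wavelet per $j$ (hence $O(\log C_{GR})$ wavelets per level $k$) is nonzero. Combining these gives a level-$k$ contribution bounded by a constant times $M |Q|^s$, and since $|Q| \leq \lambda^{k-n}|P|$ at levels $k \geq n$, a geometric summation yields $|\psi(x) - \psi(y)| \leq C M |P|^s = C M\, d(x,y)^s$. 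An analogous calculation summed over all $k \geq 0$ bounds $|\psi|_\infty$ by $|c_I| + C M$, so $|\psi|_{C^s(I)} \leq C' |\psi|_{\vs}$.

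\emph{Direction $\psi \in C^s(I) \Longrightarrow \psi \in \vs$.} Set $c_I := \int \psi\, d\mu$ and define the candidate coefficients by (\ref{cc}). Using the zero-mean property of $\phi_{(S_1,S_2)}$ and its support in $Q$, I replace $\psi$ by $\psi - \psi(z_0)$ for any fixed $z_0 \in Q$; the H\"older bound $|\psi(z) - \psi(z_0)| \leq |\psi|_{C^s(I)} |Q|^s$ for almost every $z \in Q$, combined with $\int|\phi_{(S_1,S_2)}|\,d\mu = 2$, yields $|c_{(S_1,S_2)}| \leq 2|\psi|_{C^s(I)}/K_{(S_1,S_2)} \leq |\psi|_{C^s(I)}$. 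To verify that the resulting series (\ref{seq}) converges to $\psi$ in $L^\infty$, I identify its $N$-th partial sum $S_N(\psi)$ with the conditional expectation $\mathbb{E}[\psi \mid \sigma(\mathcal{P}^{N+1})]$; this follows from $L^2$-orthogonality of the Haar system together with a dimension count showing that $\{1_I\}$ and the wavelets through level $N$ span the $\sigma(\mathcal{P}^{N+1})$-measurable functions. Then for $z \in R \in \mathcal{P}^{N+1}$, $|S_N(\psi)(z) - \psi(z)| \leq |R|^{-1}\int_R |\psi - \psi(z)|\, d\mu \leq |\psi|_{C^s(I)} |R|^s \leq |\psi|_{C^s(I)} \lambda^{(N+1)s}$, which tends to $0$ uniformly. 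This simultaneously delivers the representation and $|\psi|_{\vs} \leq C'' |\psi|_{C^s(I)}$.

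\emph{Main obstacle.} The delicate step is the first direction. Everything hinges on the per-level bookkeeping: one must argue that at a fixed point $x$ and level $k$ only $O(\log C_{GR})$ of the wavelets in $\mathcal{H}_{Q_k^x}$ are nonzero at $x$, and each satisfies $|\phi(x)| \leq 1/(\lambda_* |Q_k^x|)$. These two facts, together with the shrinking estimate $|Q_k^x| \leq \lambda^{k-n}|P|$ that turns the level sum into a convergent geometric series in $\lambda^s$, are what make the H\"older constant both finite and of the right size. Once this bookkeeping is in place, the converse direction is a clean martingale argument, with $L^\infty$ convergence supplied directly by the H\"older hypothesis.
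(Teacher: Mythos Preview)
Your proof is correct and follows essentially the same approach as the paper: both directions use the support and zero-mean structure of the Haar wavelets, the geometric decay $|Q|\le \lambda^{k-n}|P|$, and the formula (\ref{cc}) for the coefficients. Your version is more explicit in the $C^s \Rightarrow \vs$ direction, where you identify the partial sums with conditional expectations and verify $L^\infty$ convergence directly from the H\"older hypothesis; the paper leaves this step implicit, and your counting of nonzero wavelets per level as $O(\log C_{GR})$ is a slight sharpening of the paper's coarser bound absorbed into the constant $C$.
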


\begin{proof} Firstly we prove that  $\mathcal{C}^s\subseteq \vs$ and this inclusion is continuous. Indeed for each $Q\in \mathcal{P}$ choose $x_Q \in Q$ such that 
$$|\psi(x)-\psi(x_Q)|\leq \|\psi\|_{\mathcal{C}^s}d(x,x_Q)^s\leq \|\psi\|_{\mathcal{C}^s}|Q|^s$$
for almost every $y \in Q$. Consequently it follows from (\ref{cc}) that for $(S_1,S_2)\in \mathcal{H}_Q$
 $$ |c_{(S_1,S_2)}| \leq  \frac{C}{ |Q|^{s+1} } \int_Q  |\psi -\psi(x_Q)| \ d\mu \leq C\|\psi\|_{\mathcal{C}^s},$$
so $\psi \in \vs$ and $\|\psi\|_{\vs}\leq C\|\psi\|_{\mathcal{C}^s}$.   
It remains to show  that  $\vs \subset \mathcal{C}^s$ and this inclusion is continuous. 
Given $\psi\in \vs$, let $x,y \in I$. Then $J\in \mathcal{P}^{k_0}$, for some $k_0$, and such that $d(x,y)~=~|J|.$
By (\ref{seq}) we have
\begin{align*} 
&|\psi(y)- \psi(x)|\\
&\leq  \sum\limits_{k=k_0}^\infty\sum\limits_{
\substack{Q\in \mathcal{P}^k\\ Q\subset J}}\sum\limits_{{(S_1,S_2)}\in \mathcal{H}_Q}|Q|^{1+s}|c_{{(S_1,S_2)}}| |\phi_{(S_1,S_2)}(y)- \phi_{(S_1,S_2)}(x)|\\
&\leq C\|\psi\|_{\vs} \Big( |J|^s+  \sum\limits_{k> k_0} \big( \sum\limits_{
\substack{Q\in \mathcal{P}^k\\ y\in Q}}\sum\limits_{{(S_1,S_2)}\in \mathcal{H}_Q}|Q|^s+\sum\limits_{
\substack{Q\in \mathcal{P}^k\\ x\in Q}}\sum\limits_{{(S_1,S_2)}\in \mathcal{H}_Q}|Q|^s   \big)\Big)\\
&\leq C\|\psi\|_{\vs} \Big( |J|^s+  \sum\limits_{k> k_0} 2\lambda^{k-k_0}|J|^s\Big)\\
&\leq C\|\psi\|_{\vs} |J|^s\leq C\|\psi\|_{\vs} d(y,x)^s.
\end{align*} 
A similar argument gives $\|\psi\|_{L^\infty}\leq C \|\psi\|_{\vs}$,  so $\psi \in \mathcal{C}^s$ and $\|\psi\|_{\mathcal{C}^s}\leq C \|\psi\|_{\vs}$.
\end{proof}

\subsection{Test functions}
The simplest and most regular functions  in our setting are step functions that are linear combinations of characteristic functions of sets in the grid. 

\begin{defi}[Test functions] Denote by $V_{0,N}$  the linear subspace formed by all functions of the form 
    $$c_I 1_I+
    \sum\limits_{k=0}^N\sum\limits_{Q\in \mathcal{P}^k}\sum\limits_{{(S_1,S_2)}\in \mathcal{H}_Q}c_{{(S_1,S_2)}}\psi_{{(S_1,S_2)}}
    $$
     Let
    $V_0=\cup_N V_{0,N}.$ The space of {\it test functions} $$V_0=\cup_N V_{0,N}$$ is the set of functions $\psi$ with representation (\ref{seq}) such that $c_{(S_1,S_2)} = 0$ for all but {\it finitely}  many ${(S_1,S_2)}$.
\end{defi}

\noindent The linear space $V_0$  is a natural choice to the space of test functions for  distributions   in our setting.  Denote by $B^s_{\infty,\infty,o}$ the closure of $V_0$ on $\vs$.

\begin{prop}\label{oo} For every $\psi \in \vs$ with representation (\ref{seq}) the following statements are equivalent. 
\begin{itemize}
    \item[A.] $\psi \in B^s_{\infty,\infty,o}$
    \item[B.] We have $$\lim_k \  \sup_{Q\in \mathcal{P}^k} \ \sup_{(S_1,S_2)\in \mathcal{H}_Q}\  |c_{(S_1,S_2)}|   =0.$$
    \item[C.] The partial sums of the right hand side of (\ref{seq}) converges to $\psi$ on $\vs$.
    \item[D.] We have $$\lim_k \ \sup_{Q\in \mathcal{P}^k} \ \esssup_{(x,y)\in Q\times Q}  \frac{|\psi(x)-\psi(y)|}{d(x,y)^s}   =0.$$
    \end{itemize}
\end{prop}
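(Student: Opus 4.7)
The plan is to prove the circular chain $A \Rightarrow B \Rightarrow C \Rightarrow A$ together with $C \Leftrightarrow D$. The implications $B \Rightarrow C$ and $C \Rightarrow A$ follow immediately from the definitions: denoting by $\psi_N$ the $N$-th partial sum of (\ref{seq}), the representation of $\psi-\psi_N$ has all Haar coefficients through level $N$ equal to zero, so
$$|\psi-\psi_N|_{B^s_{\infty,\infty}} \;=\; \sup_{k>N}\,\sup_{Q\in\mathcal{P}^k}\,\sup_{(S_1,S_2)\in\mathcal{H}_Q}\, |c_{(S_1,S_2)}|,$$
which tends to zero under hypothesis $B$; and each $\psi_N$ lies in $V_{0,N}\subset V_0$. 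For $A\Rightarrow B$, I pick $\psi_n\in V_{0,N_n}$ with $|\psi-\psi_n|_{B^s_{\infty,\infty}}\to 0$. Since the Haar coefficients of $\psi_n$ vanish at levels $k>N_n$, those of $\psi-\psi_n$ at level $k$ coincide with those of $\psi$, and by the definition of the norm are bounded by $|\psi-\psi_n|_{B^s_{\infty,\infty}}$. Letting $n\to\infty$ gives $B$.

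For $C\Rightarrow D$, the key observation is that every wavelet $\phi_{(S_1,S_2)}$ with $(S_1,S_2)\in\mathcal{H}_Q$, $Q\in\mathcal{P}^k$, is a linear combination of indicators of children of $Q$, hence constant on every element of $\mathcal{P}^{k+1}$. Consequently $\psi_N$ is constant on each $Q\in\mathcal{P}^{N+1}$, so for almost every $(x,y)\in Q\times Q$ we have $\psi(x)-\psi(y)=(\psi-\psi_N)(x)-(\psi-\psi_N)(y)$. Invoking the equivalence of $|\cdot|_{B^s_{\infty,\infty}}$ and $|\cdot|_{C^s(I)}$ established in the previous proposition,
$$\sup_{Q\in\mathcal{P}^{N+1}}\;\mathrm{ess\,sup}_{(x,y)\in Q\times Q}\,\frac{|\psi(x)-\psi(y)|}{d(x,y)^s}\;\leq\; C\,|\psi-\psi_N|_{B^s_{\infty,\infty}}\;\longrightarrow\;0.$$
For $D\Rightarrow B$, let $\alpha_Q$ denote the local H\"older seminorm of $\psi$ on $Q$. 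A Fubini argument produces some $x_Q\in Q$ with $|\psi(x)-\psi(x_Q)|\leq \alpha_Q |Q|^s$ for a.e.\ $x\in Q$. Since $\phi_{(S_1,S_2)}$ has mean zero on $Q$, $\|\phi_{(S_1,S_2)}\|_{L^1}=2$, and $K_{(S_1,S_2)}\geq 2$, formula (\ref{cc}) forces $|c_{(S_1,S_2)}|\leq \alpha_Q$, and taking the supremum over $Q\in\mathcal{P}^k$ lets $D$ deliver $B$.

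The main technical point is the observation that the $N$-th partial sum of (\ref{seq}) is constant on every element of $\mathcal{P}^{N+1}$; this is what converts vanishing of Haar coefficients into the local H\"older decay of $D$, and vice versa. Once this is in hand, the remaining content reduces to a direct Fubini/mean-zero estimate on one side and an appeal to the H\"older characterization of the previous proposition on the other, with no further subtleties.
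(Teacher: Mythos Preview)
The paper states Proposition~\ref{oo} without proof, so there is no argument in the text to compare against. Your proof is correct and supplies exactly what the paper omits: the chain $A\Rightarrow B\Rightarrow C\Rightarrow A$ is immediate from the definitions and the uniqueness of the representation~(\ref{seq}), and your treatment of $C\Leftrightarrow D$ is sound. The only places worth a second look are harmless: in $A\Rightarrow B$ the phrase ``letting $n\to\infty$'' really means ``given $\varepsilon>0$, choose $n$ with $|\psi-\psi_n|_{\vs}<\varepsilon$ and then take $k>N_n$'', which is what you intend; and in $D\Rightarrow B$ the Fubini step uses that $d(x,y)\le|Q|$ whenever $x,y\in Q$, which follows directly from Definition~\ref{metric}. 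The computation $\|\phi_{(S_1,S_2)}\|_{L^1}=2$ together with $K_{(S_1,S_2)}\ge 2$ gives $|c_{(S_1,S_2)}|\le\alpha_Q$ exactly as you claim. Your argument is the natural one and mirrors the estimates the paper uses in the proof of Theorem~\ref{thm1} just above.
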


\subsection{Besov spaces of distributions} A {\it distribution}  $\psi$ is a linear functional $$\psi\colon V_0\rightarrow  \mathbb{C}.$$

\noindent A formal series 
$$d_I1_I+\sum\limits_{k=0}^\infty \sum\limits_{Q \in \mathcal{P}^k} \sum\limits_{{(S_1,S_2)}\in \mathcal{H}_Q} d_{(S_1,S_2)}  |Q|^{-s}  \phi_{(S_1,S_2)}$$
defines a distribution if we integrate it against a test function (there is only a finite number of no vanishing terms). One can ask which of those define a continuous functional on $\vs$, that is, we have that  
$$d_I\int \psi \ d\mu+ \sum\limits_{k=0}^\infty \sum\limits_{Q \in \mathcal{P}^k} \sum\limits_{{(S_1,S_2)}\in \mathcal{H}_Q} d_{(S_1,S_2)}  |Q|^{-s}\int \psi \phi_{(S_1,S_2)} d\mu$$
indeed converges for all $\psi \in \vs$. Note that if (\ref{seq}) holds then
\begin{align} &d_I\int \psi \ d\mu+ \sum\limits_{k=0}^N \sum\limits_{Q \in \mathcal{P}^k} \sum\limits_{{(S_1,S_2)}\in \mathcal{H}_Q}  d_{(S_1,S_2)}  |Q|^{-s}\int \psi \phi_{(S_1,S_2)} d\mu \nonumber \\ 
&\label{rep} = d_Ic_I+ \sum\limits_{k=0}^N \sum\limits_{Q \in \mathcal{P}^k} \sum\limits_{{(S_1,S_2)}\in \mathcal{H}_Q} d_{(S_1,S_2)} c_{(S_1,S_2)} K_{(S_1,S_2)},
\end{align} 
holds for $N\in \mathbb{N}\cup \{\infty\}$.

\begin{defi} Given $s> 0$, we define $\bs$ as the space of all continuous linear funtionals $\varphi\in (\vs)^\star$  that can be written as 
\begin{equation}\label{rep2}
\varphi (\psi) = d_I \int \psi \ d\mu+\sum\limits_{k=0}^\infty \sum\limits_{Q \in \mathcal{P}^k} \sum\limits_{{(S_1,S_2)}\in \mathcal{H}_Q} d_{(S_1,S_2)}  |Q|^{-s}\int \psi \phi_{(S_1,S_2)} d\mu,
\end{equation}
with $$\|\varphi\|_\bs:= |d_I|+\sum\limits_{k=0}^\infty \sum\limits_{Q \in \mathcal{P}^k} \sum\limits_{{(S_1,S_2)}\in \mathcal{H}_Q}|d_{(S_1,S_2)}|<\infty.$$
\end{defi}

\noindent Due (\ref{rep}) we have

\begin{prop}\label{eq} The representation for  (\ref{rep2}) is unique for $\varphi \in \mathcal{B}^{-s}_{1,1}$, so we can represent it uniquely as a formal series  $$\varphi = d_I1_I+ \sum_{k=0}^\infty \sum_{Q \in \mathcal{P}^k} \sum_{{(S_1,S_2)}\in \mathcal{H}_Q} d_{(S_1,S_2)} |Q|^{-s} \phi_{(S_1,S_2)}.$$ 
Moreover  $\|\cdot\|_\bs$ is a complete norm on $\bs$,  the norms $\|\cdot\|_\bs$ and $\|\cdot\|_{(\vs)^\star}$ are equivalent norms  on $\mathcal{B}^{-s}_{1,1}$, and $\mathcal{B}^{-s}_{1,1}$ is a separable  closed subspace  of $(\vs)^\star$. Moreover the closure of $V_0$ in $(\vs)^\star$ is $\mathcal{B}^{-s}_{1,1}$. Indeed
$$\varphi = \lim_N \ d_I1_I+ \sum_{k=0}^N \sum_{Q \in \mathcal{P}^k} \sum_{{(S_1,S_2)}\in \mathcal{H}_Q} d_{(S_1,S_2)} |Q|^{-s} \phi_{(S_1,S_2)}$$
in  $\mathcal{B}^{-s}_{1,1}$
\end{prop}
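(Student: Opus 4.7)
The plan is to exploit the explicit pairing formula (\ref{rep}) as a perfect duality pairing between coefficient sequences: $(c_I,(c_{(S_1,S_2)}))$ for elements of $\vs$ and $(d_I,(d_{(S_1,S_2)}))$ for elements of $\bs$. Concretely, (\ref{rep}) together with the a priori bounds $|c_I|,|c_{(S_1,S_2)}|\leq |\psi|_{\vs}$ and $2\leq K_{(S_1,S_2)}\leq 1/\lambda_\ast$ already furnishes the easy direction
\[
|\varphi(\psi)|\leq \max(1,1/\lambda_\ast)\,|\psi|_{\vs}\,|\varphi|_{\bs},
\]
so that $|\varphi|_{(\vs)^\star}\leq C|\varphi|_{\bs}$. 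Uniqueness of the representation (\ref{rep2}) follows by testing against the singleton test functions $1_I$ and $\psi_{Q,(S_1,S_2)} := |Q|^{1+s}\phi_{(S_1,S_2)}\in V_0$: each lies in $\vs$ with norm $1$ and, in view of (\ref{rep}), satisfies $\varphi(1_I)=d_I$ and $\varphi(\psi_{Q,(S_1,S_2)})=d_{(S_1,S_2)}K_{(S_1,S_2)}$, which determines all coefficients from $\varphi$.

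The reverse inequality, which I expect to be the main (but standard) obstacle, is a duality trick realizing the $\ell^1$ norm. Choose unimodular scalars $\epsilon_I$, $\epsilon_{(S_1,S_2)}$ with $d_I\epsilon_I=|d_I|$ and $d_{(S_1,S_2)}\epsilon_{(S_1,S_2)}=|d_{(S_1,S_2)}|$ (and $\epsilon=0$ when the corresponding $d$ vanishes), and form
\[
\psi_N := \epsilon_I 1_I + \sum_{k=0}^{N}\sum_{Q\in\mathcal{P}^k}\sum_{(S_1,S_2)\in\mathcal{H}_Q}|Q|^{1+s}\epsilon_{(S_1,S_2)}\phi_{(S_1,S_2)}\in V_0.
\]
Every coefficient in the expansion of $\psi_N$ has modulus at most $1$, so $|\psi_N|_{\vs}\leq 2$. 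Applying (\ref{rep}) and using $K_{(S_1,S_2)}\geq 2$ gives
\[
\varphi(\psi_N) = |d_I|+\sum_{k=0}^{N}\sum_{Q}\sum_{(S_1,S_2)} |d_{(S_1,S_2)}|K_{(S_1,S_2)} \geq |d_I|+2\sum_{k=0}^{N}\sum_{Q}\sum_{(S_1,S_2)}|d_{(S_1,S_2)}|,
\]
while $\varphi(\psi_N)\leq 2|\varphi|_{(\vs)^\star}$. Letting $N\to\infty$ yields $|\varphi|_{\bs}\leq 3|\varphi|_{(\vs)^\star}$. The margin $K_{(S_1,S_2)}\geq 2$ is precisely what absorbs the factor $|\psi_N|_{\vs}\leq 2$; this is the one delicate point in the whole argument.

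With the two norms now equivalent, the remaining claims are routine. Completeness of $\bs$ is the usual $\ell^1$ argument: a Cauchy sequence of distributions has coordinate-Cauchy coefficient sequences with uniformly summable tails, so the coordinatewise limit lies in $\bs$ and is the $\bs$-limit. The $N$-th partial sum $\varphi_N$ of (\ref{rep2}) belongs to $V_0$ (viewed inside $\bs$ via $f\mapsto \int f\cdot\, d\mu$), and $|\varphi-\varphi_N|_{\bs}$ equals the tail $\sum_{k>N}\sum_{Q}\sum_{(S_1,S_2)}|d_{(S_1,S_2)}|$, which tends to $0$; hence $V_0$ is dense in $\bs$ and, by the norm equivalence, also dense with respect to the $(\vs)^\star$ norm. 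Separability follows by restricting to partial sums with complex rational coefficients. Finally, $\bs$ is closed in $(\vs)^\star$: any $(\vs)^\star$-Cauchy sequence in $\bs$ is $\bs$-Cauchy by equivalence, hence converges in $\bs$; uniqueness of limits in $(\vs)^\star$ identifies this limit with the original one.
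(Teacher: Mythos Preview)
Your proof is correct and follows essentially the same route as the paper: both recover the coefficients by testing $\varphi$ against the wavelets to get uniqueness, identify $(\bs,|\cdot|_\bs)$ with $\ell^1$ for completeness and separability, and use the pairing formula (\ref{rep}) together with $2\leq K_{(S_1,S_2)}\leq 1/\lambda_\ast$ for the norm equivalence (the paper simply asserts the closed form $|\varphi|_{(\vs)^\star}=|d_I|+\sum |d_{(S_1,S_2)}|K_{(S_1,S_2)}$, whereas you construct the extremizing $\psi_N$ explicitly, which is actually more careful). The density of $V_0$, closedness, and convergence of partial sums are handled identically.
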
 
\begin{proof} Given $\varphi$ as above it follows from
(\ref{rep}) that 
$$d_{(S_1,S_2)}=|Q|^{1+s}K^{-1}_{(S_1,S_2)}\varphi(\phi_{(S_1,S_2)}),$$
so the representation is unique. It is easy to see that  $(\mathcal{B}^{-s}_{1,1},|\cdot|_\bs)$ is isometric to $\ell^1(\mathbb{N})$, so that space is complete and separable. By (\ref{rep}) we have that 
$$\|\varphi\|_{(\vs)^\star}= |d_I|+\sum\limits_{k=0}^\infty \sum\limits_{Q \in \mathcal{P}^k} \sum\limits_{{(S_1,S_2)}\in \mathcal{H}_Q}|d_{(S_1,S_2)}| K_{(S_1,S_2)},$$
so the norm of $\mathcal{B}^{-s}_{1,1}$ and $(\vs)^\star$ are equivalent on $\mathcal{B}^{-s}_{1,1}$ and consequently $\mathcal{B}^{-s}_{1,1}$ is closed in $(\vs)^\star$. Note that $V_0 \subset \mathcal{B}^{-s}_{1,1}$ and it follows from (\ref{rep}) that $V_0$ is dense in $\mathcal{B}^{-s}_{1,1}$.
\end{proof}

\begin{rmk} It is easy to see that the set of unbalanced Haar wavelets, in addition to $1_I$, is a unconditional Schauder basis of $\mathcal{B}^{-s}_{1,1}$. Haar wavelets are often Schauder basis  for Besov spaces in $\mathbb{R}^n$. See for instance Triebel \cite{triebel} and Oswald \cite{oswald} and  references therein. See also S. \cite{smania_2022-PDE} for results on Besov spaces with positive smoothness  for  measure spaces with good grids. 
\end{rmk}

\begin{prop}
    We have that $(\mathcal{B}^{-s}_{1,1})^\star=\vs$.
\end{prop}

\begin{proof}
Let $\rho$ be a continuous linear functional acting on  $\mathcal{B}^{-s}_{1,1}.$ Since
$$\|\phi_{(S_1,S_2)}\|_{\mathcal{B}^{-s}_{1,1}}=|Q|^s$$ we have
$$|\rho(\phi_{(S_1,S_2)})|\leq \|\rho\|_{(\mathcal{B}^{-s}_{1,1})^\star} |Q|^s.$$
Define 
$$\phi_\rho= \sum\limits_{k=0}^\infty \sum\limits_{Q \in \mathcal{P}^k} \sum\limits_{{(S_1,S_2)}\in \mathcal{H}_Q}  \frac{\rho(\phi_{(S_1,S_2)})}{K_{(S_1,S_2)}|Q|^s} |Q|^{s}  \phi_{(S_1,S_2)}.$$
Then $\|\phi_\rho\|_{\vs}\leq \|\rho\|_{(\mathcal{B}^{-s}_{1,1})^\star}$ and 
for  $\psi\in \mathcal{B}^{-s}_{1,1}$ we have
\begin{align*}
\rho(\psi)&=\lim_{k\rightarrow \infty}\rho \left(\sum_{n=0}^{k}\sum_{Q\in \mathcal{P}^n}\sum_{(S_1,S_2)\in \mathcal{H}_Q}|Q|^{-s} c_{(S_1,S_2)}\phi_{(S_1,S_2)}\right)\\
&=\lim_{k\rightarrow\infty}\sum_{n=0}^{k}\sum_{Q\in \mathcal{P}^n}\sum_{(S_1,S_2)\in \mathcal{H}_Q}|Q|^{-s} c_{(S_1,S_2)}\rho(\phi_{(S_1,S_2)})\\
&=\sum_{n\in \mathbb{N}}\sum_{Q\in \mathcal{P}^n}\sum_{(S_1,S_2)\in \mathcal{H}_Q}|Q|^{-s} c_{(S_1,S_2)}\rho(\phi_{(S_1,S_2)})\\
&=\psi(\phi).
\end{align*}
Since 
$$\rho \in \mathcal{B}^{-s}_{1,1}\mapsto \phi_\rho \in \vs$$
is a bounded linear bijective map this concludes the proof.
\end{proof}

\begin{thm}[Theorem A-\ref{thma2}]
\label{v0fecho}
$(\mathcal{B}^s_{\infty,\infty,o})^\star=\mathcal{B}^{-s}_{1,1}$.
\end{thm}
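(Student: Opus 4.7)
The plan is to identify $(B^s_{\infty,\infty,o})^\star$ with $B^{-s}_{1,1}$ via the restriction map $R\colon B^{-s}_{1,1} \to (B^s_{\infty,\infty,o})^\star$ defined by $R(\varphi) = \varphi|_{B^s_{\infty,\infty,o}}$. This map is well defined, since every $\varphi \in B^{-s}_{1,1}$ already acts continuously on the larger space $B^s_{\infty,\infty}$ by Proposition \ref{eq}; continuity $|R(\varphi)| \leq C|\varphi|_{B^{-s}_{1,1}}$ is then immediate. For injectivity: if $R(\varphi)=0$, then $\varphi$ vanishes on $V_0 \subset B^s_{\infty,\infty,o}$, hence on $1_I$ and on every Haar wavelet $\phi_{(S_1,S_2)}$; the uniqueness formula $d_I = \varphi(1_I)$, $d_{(S_1,S_2)} = |Q|^{1+s}K_{(S_1,S_2)}^{-1}\varphi(\phi_{(S_1,S_2)})$ from the proof of Proposition \ref{eq} then forces $\varphi = 0$.

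The core task is surjectivity together with a reverse norm bound. Given $\rho \in (B^s_{\infty,\infty,o})^\star$, I would define candidate coefficients $d_I := \rho(1_I)$ and
\begin{equation*}
d_{(S_1,S_2)} := \frac{|Q|^{1+s}}{K_{(S_1,S_2)}}\,\rho(\phi_{(S_1,S_2)}),
\end{equation*}
and test $\rho$ against the finite sums $\psi_{\mathcal{F}} := \sum_{(S_1,S_2) \in \mathcal{F}} \overline{\operatorname{sgn}(d_{(S_1,S_2)})}\, |Q|^{1+s}\, \phi_{(S_1,S_2)} \in V_0$, where $\mathcal{F}$ ranges over finite collections of Haar wavelets. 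By Definition \ref{beesseinfinitoinfinito} these satisfy $|\psi_{\mathcal{F}}|_{B^s_{\infty,\infty}} \leq 1$, while a direct computation from the definition of $d_{(S_1,S_2)}$ gives
\begin{equation*}
\rho(\psi_{\mathcal{F}}) \;=\; \sum_{(S_1,S_2)\in\mathcal{F}} K_{(S_1,S_2)}\, |d_{(S_1,S_2)}| \;\geq\; 2\sum_{\mathcal{F}} |d_{(S_1,S_2)}|,
\end{equation*}
using $K_{(S_1,S_2)} \geq 2$. Taking the supremum over $\mathcal{F}$, combined with $|d_I| \leq |\rho|$ (test against $1_I$), yields absolute summability and the estimate $|d_I|+\sum|d_{(S_1,S_2)}| \leq C|\rho|$. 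The formal series $\varphi := d_I 1_I + \sum d_{(S_1,S_2)}|Q|^{-s}\phi_{(S_1,S_2)}$ therefore belongs to $B^{-s}_{1,1}$ with $|\varphi|_{B^{-s}_{1,1}} \leq C|\rho|$.

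It then remains to verify $R(\varphi) = \rho$: by construction the two functionals agree on $1_I$ and on each Haar wavelet, hence on $V_0$ by linearity, and density of $V_0$ in $B^s_{\infty,\infty,o}$ combined with continuity of both functionals extends the equality to all of $B^s_{\infty,\infty,o}$. The only real subtlety is keeping the two normalizations straight: the constants $K_{(S_1,S_2)}$ connect the coefficients in representation (\ref{seq}) for elements of $B^s_{\infty,\infty}$ with those in representation (\ref{rep2}) for elements of $B^{-s}_{1,1}$, so identity (\ref{rep}) must be applied consistently. Once that bookkeeping is under control, the proof reduces to the classical duality between a $c_0$-type space (as identified by characterization B of Proposition \ref{oo}) and $\ell^1$.
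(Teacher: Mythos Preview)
Your proof is correct and follows essentially the same strategy as the paper: define candidate coefficients by evaluating the functional on $1_I$ and on each Haar wavelet, establish $\ell^1$-summability, and then check agreement on $V_0$ and extend by density. The only difference is that you make the summability step fully explicit via the test functions $\psi_{\mathcal{F}}$, whereas the paper compresses this into the line ``given that $\tau$ is a bounded functional on $B^s_{\infty,\infty,o}$ we conclude $\sum |Q|^{s+1}|\tau(\phi_{(S_1,S_2)})|<\infty$'' after first using Proposition~\ref{oo}.C to write $\tau(\psi)$ as a series; your remark that this is the classical $c_0$--$\ell^1$ duality (via characterization~B of Proposition~\ref{oo}) is exactly the content of that step.
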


\begin{proof} Of course  $\mathcal{B}^{-s}_{1,1}\subset (\vs)^\star\subset (\mathcal{B}^s_{\infty,\infty,o})^\star$ and all inclusions are continuous. It remains to show that the inclusion of $\mathcal{B}^{-s}_{1,1}$ in $ (\mathcal{B}^s_{\infty,\infty,o})^\star$  is onto. Indeed  let $\tau \in (\mathcal{B}^s_{\infty,\infty,o})^*$ and $\psi \in \mathcal{B}^s_{\infty,\infty,o}$ with a representation as in (\ref{seq}) then by Proposition \ref{oo}.C
\begin{align*}
\tau(\psi)&=\lim_N  \tau\left(\sum_{k=0}^N \sum_{Q\in \mathcal{P}^k}\sum\limits_{{(S_1,S_2)}\in \mathcal{H}_Q}|Q|^{s+1}c_{(S_1,S_2)}\phi_{(S_1,S_2)}\right)\\
&=\lim_N \sum_{k\in\mathbb{N}}\sum_{Q\in \mathcal{P}^k}\sum\limits_{{(S_1,S_2)}\in \mathcal{H}_Q}|Q|^{s+1}c_{(S_1,S_2)}\tau(\phi_{(S_1,S_2)})\\
&= \sum_{k\in\mathbb{N}}\sum_{Q\in \mathcal{P}^k} \sum\limits_{{(S_1,S_2)}\in \mathcal{H}_Q}|Q|^{s+1}c_{(S_1,S_2)} \tau(\phi_{(S_1,S_2)}),
\end{align*}
and given that $\tau$ is a bounded  functional on $B^s_{\infty,\infty,o}$ we conclude 
$$\sum_{k\in\mathbb{N}}\sum_{Q\in \mathcal{P}^k} \sum\limits_{{(S_1,S_2)}\in \mathcal{H}_Q}  |Q|^{s+1}|\tau(\phi_{(S_1,S_2)})|<\infty,$$
so 
$$\phi = \sum_{k\in\mathbb{N}}\sum_{Q\in \mathcal{P}^k} \sum\limits_{{(S_1,S_2)}\in \mathcal{H}_Q}  \frac{|Q|^{s+1}}{K_{(S_1,S_2)}} \tau(\phi_{(S_1,S_2)}) |Q|^{-s}\phi_{(S_1,S_2)}$$
belongs to $\mathcal{B}^{-s}_{1,1}$. Now (\ref{rep}) gives
$$\phi(\psi)=\tau(\psi).$$
\end{proof}

%\section{A atomic decomposition of $\bs$}

\section{Dirac Masses,  Particle Systems and Dipoles}
\label{sec_ps}

 \subsection{Dirac Masses.}
 In this section, we prove the elements of $\bs$ can be  described by a combination of distributions  similar to the usual Dirac masses. Since $\vs$-observables are defined \textit{almost everywhere}, the usual definition of Dirac distributions does not make sense, since the evaluation at a point is meaningless.
 %(what does it mean to evaluate an almost everywhere defined function at a point?). 
 %Because of this, we introduce the following concept.

\begin{defi}[Dirac masses]
\label{dirac-defi} Let $\hat{I}$ be the set of all  possible sequences $x=(Q_0,Q_1,\ldots)$, where, $Q_0=I$ and  $Q_{j+1}\in \Omega_{Q_j}$. We will write $x\in Q_j$ for every $j$. Define the {\it Dirac mass} $\delta_x$ associated to $x\in \hat{I}$ as
\begin{equation}
    \label{diracs-formula}
    \delta_x = \lim\limits_{j \to \infty} \dfrac{{{1}}_{Q_j}}{|Q_j|},
\end{equation}
where the limit is with respect to the norm on $\bs$. 
Note that $\delta_x$ is well-defined due to the following lemma.
\end{defi}

\begin{rmk} We can define a {\it metric} $\hat{d}$ on $\hat{I}$  similar to the pseudo-metric $d$ on $I$ as defined in Definition \ref{metric}. With this metric $\hat{I}$ is a Cantor set. There is a natural projection map $\pi\colon I \rightarrow \hat{I}$ and if we define the measure $\hat{\mu}=\pi^\star \mu$ then  $(\hat{I},\hat{d},\hat{\mu})$   is a 1-Ahlfors-regular metric space. 
\end{rmk}

\begin{lemma}
\label{generalized-diracs-lemma}
    Let $x=(Q_j)_{j\geq 0} \in \hat{I}$ and define, for all $j \geq 0$, the following element  $\phi_j \in \bs$
$$ \phi_j :=\dfrac{{{1}}_{Q_j}}{|Q_j|}$$    
 %   \[
  %  \phi_j(\psi):=\dfrac{1}{|Q_j|}\int \psi{{1}}_{Q_j} d\mu.
  %  \]
    Then $(\phi_j)_{j \geq 0}$ is a convergent sequence  in $\bs$.
\end{lemma}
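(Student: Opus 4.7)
The plan is to prove $(\phi_j)_{j\geq 0}$ is Cauchy in $B^{-s}_{1,1}$ and then invoke completeness of $B^{-s}_{1,1}$, which was established in Proposition \ref{eq}. So the entire task reduces to getting a summable estimate on $|\phi_{j+1}-\phi_j|_{B^{-s}_{1,1}}$.

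The first step is the following structural observation. The function $\phi_{j+1}-\phi_j$ is supported in $Q_j$, is constant on each child in $\Omega_{Q_j}$ (it equals $|Q_{j+1}|^{-1}-|Q_j|^{-1}$ on $Q_{j+1}$ and $-|Q_j|^{-1}$ on the other children), and has zero integral. The linear span of $\{1_{Q_j}\}\cup\{\phi_{(S_1,S_2)}:(S_1,S_2)\in\mathcal H_{Q_j}\}$ equals the space of step functions on $Q_j$ constant on $\Omega_{Q_j}$ (both have dimension $n_{Q_j}$), and the Haar wavelets are orthogonal to $1_{Q_j}$. Hence one can write
\[
\phi_{j+1}-\phi_j=\sum_{(S_1,S_2)\in\mathcal H_{Q_j}}\alpha_{(S_1,S_2)}\,\phi_{(S_1,S_2)}
=\sum_{(S_1,S_2)\in\mathcal H_{Q_j}}\bigl(\alpha_{(S_1,S_2)}|Q_j|^{s}\bigr)\,|Q_j|^{-s}\phi_{(S_1,S_2)},
\]
and by the uniqueness of the representation in Proposition \ref{eq} no other wavelets contribute, so
\[
|\phi_{j+1}-\phi_j|_{B^{-s}_{1,1}}=|Q_j|^{s}\sum_{(S_1,S_2)\in\mathcal H_{Q_j}}|\alpha_{(S_1,S_2)}|.
\]

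The second step, which I expect to be the main technical point, is a uniform bound on the coefficients $\alpha_{(S_1,S_2)}$. Since $\int\phi_j\phi_{(S_1,S_2)}\,d\mu=0$ (mean-zero) and the Haar wavelets are orthogonal, I would use
\[
\alpha_{(S_1,S_2)}=\frac{\int\phi_{j+1}\phi_{(S_1,S_2)}\,d\mu}{|\phi_{(S_1,S_2)}|_{L^2}^{2}}
=\frac{\phi_{(S_1,S_2)}(Q_{j+1})}{K_{(S_1,S_2)}/|Q_j|},
\]
where $\phi_{(S_1,S_2)}(Q_{j+1})$ denotes the constant value of $\phi_{(S_1,S_2)}$ on $Q_{j+1}$ (which is zero unless $Q_{j+1}\in S_1\cup S_2$). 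Since $\sum_{P\in S_i}|P|\geq\min_{P\in\Omega_{Q_j}}|P|\geq\lambda_{*}|Q_j|$, we have $|\phi_{(S_1,S_2)}(Q_{j+1})|\leq(\lambda_{*}|Q_j|)^{-1}$, and combined with $K_{(S_1,S_2)}\geq 2$ this gives $|\alpha_{(S_1,S_2)}|\leq 1/(2\lambda_{*})$, uniformly in $j$.

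Combining, and using $\#\mathcal H_{Q_j}\leq C_{GR}$ together with the grid decay $|Q_j|\leq\lambda^{j}$, we obtain
\[
|\phi_{j+1}-\phi_j|_{B^{-s}_{1,1}}\leq\frac{C_{GR}}{2\lambda_{*}}\,|Q_j|^{s}\leq\frac{C_{GR}}{2\lambda_{*}}\,\lambda^{sj}.
\]
Since $0<\lambda<1$ and $s>0$, the series $\sum_j\lambda^{sj}$ converges, which proves $(\phi_j)$ is Cauchy and completes the proof. The hard part is really just locating the right orthogonality/support structure to reduce the estimate to a single scale; once the coefficients $\alpha_{(S_1,S_2)}$ are seen to depend only on the ratio of two quantities both comparable to $|Q_j|^{-1}$, the geometric decay at rate $\lambda^{s}$ falls out automatically.
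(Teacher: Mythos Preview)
Your proof is correct and follows essentially the same approach as the paper: both expand $\phi_{j+1}-\phi_j$ in the Haar wavelets $\phi_{(S_1,S_2)}$ with $(S_1,S_2)\in\mathcal H_{Q_j}$, bound the resulting $B^{-s}_{1,1}$ norm by a constant times $|Q_j|^s\leq\lambda^{sj}$, and conclude by completeness. The only cosmetic difference is that the paper writes down the explicit coefficient formula directly, whereas you recover it via the $L^2$-orthogonality of the Haar system; the estimates and the conclusion are the same.
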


\begin{proof}
     It is enough to show that  $(\phi_j)_{j \geq 0}$ is a Cauchy sequence. Indeed, as
\[
\phi_{j+1} = \phi_j + \sum\limits_{\substack{(P_1,P_{2}) \in \mathcal{H}_{Q_j} \\ Q_{j+1}\in P_1\cup P_{2}}} \dfrac{|Q_{j+1} \cap P_1|\cdot |P_1|+|Q_{j+1} \cap P_2|\cdot |P_2|}{|P_1 \cup P_2|} \phi_{(P_1,P_2)}.
\]
\noindent Hence, we have
\begin{equation}
\label{problems}
\begin{array}{lll}
     \|\phi_{j+1}-\phi_j\|_\bs &= \displaystyle\sum\limits_{\substack{(P_1,P_{2}) \in \mathcal{H}_{Q_j} \\ Q_{j+1}\in P_1\cup P_{2}}} |Q_j|^s \cdot \dfrac{|Q_{j+1} \cap P_1|\cdot |P_1|+|Q_{j+1} \cap P_2|\cdot |P_2|}{|P_1 \cup P_2|} \\ \\ &\leq C \lambda^j,
\end{array}
\end{equation}
because $Q_j \in \mathcal{P}^j$. As the right hand side of \eqref{problems} is summable in $j$, we get that $(\phi_j)_{j\geq 0}$ is Cauchy sequence in $\bs$, as desired.
\end{proof}

 From the proof of the previous lemma, we have
\begin{equation}
    \label{dirac-nonbinary-expression}
    \delta_x = \sum\limits_{k=0}^{\infty} \sum\limits_{\substack{(P_1,P_2) \in \mathcal{H}_{Q_k} \\ Q_{k+1}\in P_1 \cup P_2}} \dfrac{|Q_{k+1} \cap P_1|\cdot |P_1|+|Q_{k+1} \cap P_2|\cdot |P_2|}{|P_1 \cup P_2|} \phi_{(P_1,P_2)}.
\end{equation}

Next we define the class of distributions that will be the building blocks of the atomic decomposition of $\bs$.

\begin{defi}
\label{finite-config-defi}
    A linear functional $\gamma : \vs \to \mathbb{C}$ is called a \textit{finite configuration of particles (FC)} if there exists $m_i \in \mathbb{C}$, $x_i \in \hat{I}$, for $i=1, \ldots , n$ such that
    \[
    \gamma = \sum\limits_{i=1}^n m_i \delta_{x_i}.
    \]
\end{defi}

From now on, all the series will converge on  the norm  of $(\vs)^\star$.

\begin{defi}
\label{wonder}
     A linear functional $\gamma : \vs \to \mathbb{C}$ is called a \textit{$s$-particle system $\pssymb$} if
    \begin{equation}
    \label{dell}
    \gamma = \lim_i  \gamma_i,
    \end{equation}
on  the norm  of $(\vs)^\star$. Here $(\gamma_i)$ is a sequence of finite configurations of particles. Of course  $\gamma \in (\vs)^\star$. An useful example of particle system that we are considering are the following:

\end{defi} 

\begin{defi}
    A linear functional $\gamma: \vs \to \mathbb{C}$ is called a \textit{dipole} if
    \[
    \gamma = \delta_x - \delta_y,
    \]
    for some $x,y \in \hat{I}$ with $x\neq y$.
\end{defi}

\section{Unconditional basis of dipoles} 
\label{sec:dipolebasis}
\begin{defi}
\label{dipolebasis} A  {\it 
 dipole basis} is an indexed family of dipoles (and a single additional Dirac mass) defined in the following way.  Firstly  for each $$P\in \mathcal{F}=\{I\}\cup \bigcup_{P\in \mathcal{P}} \bigcup_{(P_1,P_2)\in \mathcal{H}_P} \{P_1,P_2   \}$$ we choose a Dirac mass $x_P \in \hat{I}$ such that 
\begin{itemize}
    \item[A.] $x_P\in P$,
    \item[B.] If $P,Q \in \mathcal{F}$, $P \subset Q$ and $x_Q\in P$ then $x_P=x_Q$.
\end{itemize}
A dipole basis associated with this choice is the indexed family 
$$\{\delta_I\}\cup \displaystyle\bigcup_{Q\in \mathcal{P}}\bigcup_{(P_1,P_2)\in \mathcal{H}_Q}\{ \delta_{x_{P_1}}-\delta_{x_{P_2}}\}$$ 
\end{defi}
There are many possible choices of a dipole basis. From now on we fix one of them.
\begin{defi}
\label{dyp-config-defi}
    A distribution  $\gamma : V_0 \to \mathbb{C}$ is called a \textit{dipole configuration (DC)} if there exists $m_0 \in \mathbb{C}$ and, for all $P \in \mathcal{P}$, and coefficients $m_0, (m_{(P_1,P_2)})_{\substack{(P_1,P_2)\in \mathcal{H}_P}}$  such that
    \begin{equation}
    \label{dc-defi}
    \gamma =  m_0\cdot \delta_{I}+\sum\limits_{k=0}^\infty \sum\limits_{P\in \mathcal{P}^k}\sum\limits_{\substack{ (P_1,P_2) \in \mathcal{H}_P}} m_{(P_1,P_2)} \cdot (\delta_{x_{P_1}}-\delta_{x_{P_2}}).
    \end{equation}
 Note that $\gamma$ is well-defined since for $\varphi \in V_0$ only a finite number of dipoles in the dipole basis is no vanishing.
\end{defi}
 
\begin{rmk}
    
If a distribution  has a representation as a $\dcsymb$, then the coefficients $m_{(P_1,P_2)}$ are uniquely determined. Indeed, if
\[
\begin{array}{lllll}
     0 &=\gamma=&  m_0\cdot \delta_{I}+\displaystyle\sum\limits_{k=0}^\infty \sum\limits_{P\in \mathcal{P}^k}\sum\limits_{\substack{(P_1, P_2) \in \mathcal{H}_P}} m_{(P_1,P_2)} \cdot (\delta_{x_{P_1}}-\delta_{x_{P_2}}), 
\end{array}
\]
we see that $0 = \gamma ({{1}}_I) = m_0$ and by a recursive argument on $k$ one can show for all $k\geq 0$, $P \in \mathcal{P}^k$ and $(P_1,P_2) \in \mathcal{H}_P$,
\[
m_{(P_1,P_2)} = \gamma ({{1}}_{P_1}) = 0.
\]
\end{rmk}
We say that $\gamma$ belongs to $\dcsymb$ if
\[
\|\gamma\|_{\dcsymb}=|m_0|+\sum\limits_{k=0}^\infty \sum\limits_{P\in \mathcal{P}^k}|P|^{s}\sum\limits_{\substack{P_1, P_2 \in \mathcal{H}_P}} |m_{(P_1,P_2)}| < \infty.
\]

Our main result is the following

\begin{thm}[Theorem B-\ref{thmb1}]
\label{main result}
    We have that  $\mathcal{B}^{-s}_{1,1}=\pssymb=\dcsymb$.  Indeed there is $C > 0$, that does no depend on the chosen dipole basis such that 
    $$  \frac{1}{C}\|\varphi\|_{\dcsymb}\leq  \|\varphi\|_{\mathcal{B}^{-s}_{1,1}}\leq C \|\varphi\|_{\dcsymb}.$$
\end{thm}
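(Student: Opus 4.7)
The plan is to prove the three equalities in sequence. The inclusion $DC^s \subseteq B^{-s}_{1,1}$ with $|\varphi|_{B^{-s}_{1,1}} \leq C |\varphi|_{DC^s}$ reduces, by the triangle inequality, to bounding the $B^{-s}_{1,1}$-norm of each individual basis dipole. Given $(P_1, P_2) \in \mathcal{H}_P$ with $P \in \mathcal{P}^{k_P}$, both $x_{P_1}$ and $x_{P_2}$ lie in $P$, so the terms in the Haar expansion (\ref{dirac-nonbinary-expression}) at levels $j < k_P$ coincide for $\delta_{x_{P_1}}$ and $\delta_{x_{P_2}}$ and cancel in the dipole. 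For $j \geq k_P$ the estimate (\ref{problems}) gives a geometric bound that sums to $|\delta_{x_{P_1}} - \delta_{x_{P_2}}|_{B^{-s}_{1,1}} \leq C|P|^{s+1} \leq C|P|^s$, yielding the desired comparison.

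For $B^{-s}_{1,1} = PS^s$, the inclusion $PS^s \subseteq B^{-s}_{1,1}$ follows from the uniform bound $|\delta_x|_{B^{-s}_{1,1}} \leq C$ (by the telescoping / geometric summation argument already used in Lemma \ref{generalized-diracs-lemma}) combined with the closedness of $B^{-s}_{1,1}$ inside $(\vs)^\star$ from Proposition \ref{eq}. For the converse I take the partial Haar sum $\varphi_N \in V_0$ and approximate it by a finite Dirac combination $\tilde\gamma_{N,M} = \sum_{R \in \mathcal{P}^M} c_R |R| \delta_{y_R}$, where $c_R$ is the value of the step function $\varphi_N$ on $R$ and $y_R \in \hat{I}$ projects to $R$. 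A rescaling of the dipole bound gives $|1_R - |R|\delta_{y_R}|_{B^{-s}_{1,1}} \leq C|R|^{s+2}$, and combined with $\sum_{R \in \mathcal{P}^M} |R| = 1$ this forces $|\varphi_N - \tilde\gamma_{N,M}|_{B^{-s}_{1,1}} \to 0$ as $M \to \infty$ for each fixed $N$; a diagonal choice then converges to $\varphi$ in the equivalent $(\vs)^\star$-norm, witnessing $\varphi \in PS^s$.

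The main obstacle is the reverse inclusion $B^{-s}_{1,1} \subseteq DC^s$. I exploit a triangular structure: by (\ref{dirac-nonbinary-expression}), a basis dipole at level $k_P$ has vanishing Haar coefficient for every wavelet of scale coarser than $|P|$, so the change-of-basis operator from dipole coefficients to Haar coefficients is block lower-triangular with respect to scale. I construct an explicit right inverse $S: B^{-s}_{1,1} \to DC^s$ of $T$ by producing, for each Haar wavelet $|Q|^{-s}\phi_{(S_1, S_2)}$, a preimage $U_{(S_1, S_2)} \in DC^s$ of uniform $DC^s$-norm. The leading term of $U_{(S_1, S_2)}$ is a scalar multiple of the corresponding basis dipole $\delta_{x_{S_1}} - \delta_{x_{S_2}}$, tuned so that its Haar expansion reproduces the target leading coefficient $|Q|^{-s}$; the unwanted contributions to the ancestor wavelets of $(S_1, S_2)$ in the binary tree $\mathcal{H}_Q$ are cancelled by adding multiples of the ancestor basis dipoles, which amounts to a uniformly invertible linear system of size $O(\log C_{GR})$; and the residuals living at strictly smaller scales are cancelled by iterating the construction, producing a geometrically convergent $DC^s$-series thanks to $\lambda < 1$. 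A parallel construction handles the constant term $1_I$. The main technical point is verifying uniform invertibility of the ancestor-correction system and convergence of the iterated corrections with a constant depending only on the good-grid parameters $\lambda$, $\lambda_*$, $C_{GR}$; this uniformity is precisely what yields the independence of $C$ from the chosen dipole basis.
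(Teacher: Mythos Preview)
Your treatment of $DC^s \subseteq B^{-s}_{1,1}$ and of $PS^s = B^{-s}_{1,1}$ is essentially the paper's (up to harmless exponent slips: the dipole bound is $\leq C|P|^s$, not $|P|^{s+1}$, and $|1_R - |R|\delta_{y_R}|_{B^{-s}_{1,1}} \leq C|R|^{1+s}$, not $|R|^{s+2}$; both suffice). The substantive difference is in the hard inclusion $B^{-s}_{1,1} \subseteq DC^s$.

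The paper's route is constructive and much shorter. Instead of inverting a change-of-basis operator, it writes each Haar wavelet explicitly as a $DC^s$-convergent series. The trick is a ``Riemann sum'' $A^i_J = \sum_{P \subset J,\, P \in \mathcal{P}^{k_0+i}} |P|\,\delta_{x_P}$ for $J \in \mathcal{F}$; the compatibility condition (B) in the dipole-basis definition makes the increment $A^{i+1}_J - A^i_J$ an explicit linear combination of basis dipoles at level $k_0+i$, with $DC^s$-norm $\leq C\lambda^{si}|J|^{1+s}$ (Lemma \ref{conta}). Summing the telescoping series and identifying the limit as $1_J$ then gives $|\phi_{(Q_1,Q_2)}|_{DC^s} = \big|\tfrac{1_{Q_1}}{|Q_1|} - \tfrac{1_{Q_2}}{|Q_2|}\big|_{DC^s} \leq C|Q|^s$ directly. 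In effect the paper recycles your Riemann-sum idea (which you used only for $B^{-s}_{1,1} \subseteq PS^s$) and, by exploiting condition (B), upgrades it to produce dipole-basis combinations rather than arbitrary finite configurations.

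Your iterative-inversion approach can be made to work, but the sketch has a structural slip. For an ancestor $(T_1,T_2)$ of $(S_1,S_2)$ in $\mathcal{H}_Q$, both $x_{S_1}$ and $x_{S_2}$ lie in the same side (say $\cup T_1$), so $\phi_{(T_1,T_2)}$ takes the same constant value at both and the dipole contribution there is \emph{zero}. The nonzero same-scale residuals of $\delta_{x_{S_1}}-\delta_{x_{S_2}}$ live at the \emph{descendants} of $(S_1,S_2)$ in $\mathcal{H}_Q$ (where only one of the two Diracs is supported). The within-$\mathcal{H}_Q$ system is therefore lower-triangular in the descendant direction, not the ancestor direction; it is still finite and uniformly invertible, so your conclusion survives, but you should also check that the strictly-finer-scale residual is small enough for the cross-scale iteration to contract---this is not automatic from the bounds you quote, and the paper's explicit construction sidesteps the issue entirely.
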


\begin{lemma}\label{compnorms} There is $C> 0$, that does not depend on the chosen dipole basis, such that the following holds. Let $Q \in \mathcal{P}$ and $(Q_1,Q_2) \in \mathcal{H}_Q$. Then
    \[
    \|\delta_{x_{Q_1}}-\delta_{x_{Q_1}}\|_\bs \leq C \|\delta_{x_{Q_1}}-\delta_{x_{Q_1}}\|_\dcsymb.
    \]
\end{lemma}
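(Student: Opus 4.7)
The plan is to telescope $\delta_{x_{Q_1}}-\delta_{x_{Q_2}}$ at the common ancestor $Q$ and reuse the mechanism behind Lemma \ref{generalized-diracs-lemma}. Let $j_0$ be the integer with $Q\in\mathcal P^{j_0}$. Since $\cup Q_1,\cup Q_2\subset Q$, both $x_{Q_1}$ and $x_{Q_2}$ share the same initial path $(I,\dots,Q)$ in $\hat I$, and their sequences begin to differ only at level $j_0+1$. Writing $\phi^x_j:=\mathbf{1}_{Q^x_j}/|Q^x_j|$, this means $\phi^{x_{Q_1}}_j=\phi^{x_{Q_2}}_j$ for every $j\le j_0$; the two telescopings for the Diracs cancel term by term for $j<j_0$, yielding
\[
\delta_{x_{Q_1}}-\delta_{x_{Q_2}}
=\sum_{j\ge j_0}\bigl(\phi^{x_{Q_1}}_{j+1}-\phi^{x_{Q_1}}_j\bigr)-\sum_{j\ge j_0}\bigl(\phi^{x_{Q_2}}_{j+1}-\phi^{x_{Q_2}}_j\bigr),
\]
with both series converging in $\bs$.

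The next step is to extract a sharper pointwise version of (\ref{problems}). Bounding $\tfrac{|Q^x_{j+1}\cap P_r|\cdot|P_r|}{|P_1\cup P_2|}\le|Q^x_{j+1}|$ and noting that for a fixed child $Q^x_{j+1}$ at most $O(\log C_{GR})$ pairs $(P_1,P_2)\in\mathcal H_{Q^x_j}$ contain it, the computation inside (\ref{problems}) gives
\[
\bigl|\phi^x_{j+1}-\phi^x_j\bigr|_{\bs}\le C\,|Q^x_j|^s\,|Q^x_{j+1}|\le C'\,|Q^x_j|^{1+s}.
\]
For $j\ge j_0$ the good-grid condition yields $|Q^x_j|\le\lambda^{j-j_0}|Q|$. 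Summing the resulting geometric series of ratio $\lambda^{1+s}<1$ and applying the triangle inequality produces
\[
\bigl|\delta_{x_{Q_1}}-\delta_{x_{Q_2}}\bigr|_{\bs}\le\frac{2C'}{1-\lambda^{1+s}}\,|Q|^{1+s}\le C_*\,|Q|^s,
\]
since $\mu(Q)\le 1$. By the uniqueness of the $DC$ representation noted after Definition \ref{dyp-config-defi}, the element $\delta_{x_{Q_1}}-\delta_{x_{Q_2}}$ has the single nonzero dipole coefficient $m_{(Q_1,Q_2)}=1$, so $|\delta_{x_{Q_1}}-\delta_{x_{Q_2}}|_{\dcsymb}=|Q|^s$; combining the two estimates closes the argument. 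The constant $C_*$ depends only on $s,\lambda,C_{GR}$, and is in particular independent of the chosen dipole basis.

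The point that requires care is that the bound $C\lambda^j$ literally stated in (\ref{problems}) is not sharp enough by itself: summing it from $j_0$ would only give $O(\lambda^{j_0})$, which need not be $\le|Q|^s$ when $\lambda_*<\lambda^{1/s}$ (e.g.\ $\lambda=1/2$, $\lambda_*=1/10$, $s=1/2$). The geometry of the good grid is only encoded in the coarser exponent $j$, so one has to reach one line deeper into the proof of Lemma \ref{generalized-diracs-lemma} and keep the factor $|Q^x_{j+1}|$ explicit; this upgrade to $|Q^x_j|^{1+s}$ is what makes the geometric series compare directly to the measure $|Q|$ (and hence to $|Q|^s$) rather than to the generic ratio $\lambda$.
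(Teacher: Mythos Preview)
Your proof is correct and follows essentially the same route as the paper's: cancel the telescoping expansions of $\delta_{x_{Q_1}}$ and $\delta_{x_{Q_2}}$ at the common ancestor $Q\in\mathcal P^{j_0}$, keep the measure $|Q^x_j|$ (rather than the crude $\lambda^j$) in the step bound, use $|Q^x_j|\le\lambda^{j-j_0}|Q|$ from the good grid, and sum the resulting geometric series to land on $C|Q|^s=|\delta_{x_{Q_1}}-\delta_{x_{Q_2}}|_{\dcsymb}$. The only cosmetic difference is that the paper bounds the fraction in (\ref{problems}) by a constant and works with $|Q^x_j|^s$ (summing in $\lambda^s$), whereas you bound it by $|Q^x_{j+1}|$ and work with $|Q^x_j|^{1+s}$ (summing in $\lambda^{1+s}$), discarding the extra factor of $|Q|$ at the end; your final paragraph explaining why the literal $C\lambda^j$ from (\ref{problems}) would \emph{not} suffice is a useful remark that the paper leaves implicit.
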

\begin{proof}
    Write $x_{Q_i} = (Q^i_k)_k$ and $N=k_0(Q)$, where
    \[
    k_0(Q)=\min\{\ell:\exists P\in \mathcal{P}^\ell: Q\subset P\}
    \]
    By assumption, the Diracs agree up to the level $N$ so, by \eqref{dirac-nonbinary-expression},
   \begin{equation}
       \begin{array}{cc}
            \delta_{x_{Q_1}} - \delta_{x_{Q_2}} &= \displaystyle\sum\limits_{k=N}^{\infty} \sum\limits_{\substack{(P_1,P_2) \in \mathcal{H}_{Q_k^1} \\ Q_{k+1}^1\in P_1 \cup P_2}} \dfrac{|Q_{k+1}^1 \cap P_1|\cdot |P_1|+|Q_{k+1}^1 \cap P_2|\cdot |P_2|}{|P_1 \cup P_2|} \phi_{(P_1,P_2)} \\ 
        &+ \displaystyle\sum\limits_{k=N}^{\infty} \sum\limits_{\substack{(P_1,P_2) \in \mathcal{H}_{Q_k^2} \\ Q_{k+1}^2\in P_1 \cup P_2}} \dfrac{|Q_{k+1}^2 \cap P_1|\cdot |P_1|+|Q_{k+1}^2 \cap P_2|\cdot |P_2|}{|P_1 \cup P_2|} \phi_{(P_1,P_2)}
       \end{array}
   \end{equation}
   so we can compute the norm
   \begin{equation}
   \label{termAdef}
       \begin{array}{cccc}    \|\delta_{x_{Q_1}} - \delta_{x_{Q_2}}\|_\bs &\leq \displaystyle\sum\limits_{k=N}^{\infty} |Q_k^1|^s \sum\limits_{\substack{(P_1,P_2) \in \mathcal{H}_{Q_k^1} \\ Q_{k+1}^1\in P_1 \cup P_2}} \dfrac{|Q_{k+1}^1 \cap P_1|\cdot |P_1|+|Q_{k+1}^1 \cap P_2|\cdot |P_2|}{|P_1 \cup P_2|} \\ 
&+ \displaystyle\sum\limits_{k=N}^{\infty} |Q_k^2|^s \sum\limits_{\substack{(P_1,P_2) \in \mathcal{H}_{Q_k^2} \\ Q_{k+1}^2\in P_1 \cup P_2}} \dfrac{|Q_{k+1}^2 \cap P_1|\cdot |P_1|+|Q_{k+1}^2 \cap P_2|\cdot |P_2|}{|P_1 \cup P_2|}.
       \end{array}
   \end{equation}

Now, for any $k\geq N$, since $\mathcal{P}$ is a good grid, we have
\[
|Q_k^i|=|Q_N^i|\cdot \prod\limits_{m=N+1}^k \dfrac{|Q_m^i|}{|Q_{m-1}^i|}\leq \lambda^{k-N} \cdot |Q_N^i| = \lambda^{k-N} \cdot |Q|
\]
Hence,
\[
\begin{array}{llll}
     \|\delta_{x_{Q_1}} - \delta_{x_{Q_2}}\|_\bs &\leq&  C_{GR} \left[ \displaystyle\sum\limits_{k=N}^\infty |Q_k^1|^s + \sum\limits_{k=N}^\infty |Q_k^2|^s \right] \\ \\
     &\leq & 2 C_{GR} \left[\displaystyle\sum\limits_{k=N}^\infty \lambda^{s(k-N)}\cdot |Q|^s \right] \\ \\
     &\leq & 2 C_{GR} \left[\lambda^{-Ns} \displaystyle\sum\limits_{k=N}^\infty \lambda^{ks} \cdot |Q|^s \right] \\ \\
     &=& 2 C_{GR} \left[\lambda^{-Ns} \dfrac{\lambda^{Ns}}{1-\lambda^s} \cdot |Q|^s \right] \\ \\
     &=&  \dfrac{2 C_{GR}}{1-\lambda^s} \cdot |Q|^s.
\end{array} 
\]

as desired.
\end{proof}

Consider the dyadic grid on $[0,1]$, $\mathcal{C}^s[0,1]$ the space of Hölder functions on $[0,1]$ and, for $p\in [0,1]$, $\delta_p: \mathcal{C}^s[0,1] \to \mathbb{R}$ the usual Dirac mass concentrated on $p$. Of course, for the classic Besov spaces, the identity $\mathcal{B}^s_{\infty, \infty}[0,1] = \mathcal{C}^s[0,1]$ holds (see, for instance, Sawano \cite{sawano2018theory}) and one can prove that the sequence 
\[
\sum\limits_{k=0}^{2^n}\left(\dfrac{1}{2}\right)^n \delta_{k/2^n}
\]
of Riemann sums converges to $1_{[0,1]}$ in $(\mathcal{C}^s[0,1])^*$. The next lemma is a generalization of the technique in the setting of abstract measure spaces with a good grid. In this case, the sequence of "Riemann sums" are defined recursively. 

%If  $Q \in \mathcal{F}$, define $A_0 = |Q|\delta_{x_Q}$ and, for $n\geq 0$ and every summand of the form $|R|\delta_{x_R}$ in $A_n$, we add the summands 
%    \[
 %   |R_1|\delta_{x_{R_1}} + |R_2| \delta_{x_{R_2}}
%    \]
 %   to $A_{n+1}$, where $R_1, R_2 \in \Lambda_R$.

\begin{lemma}\label{conta} For every 
$(P_1,P_2)\in \mathcal{H}_Q$, with $Q\in \mathcal{P}$, let 
$$m_{(P_1,P_2)}=\begin{cases} |P_1|, &if \ x_{P_2}=x_{P_1\cup P_2},\\
-|P_2|, &if \ x_{P_1}=x_{P_1\cup P_2}.\end{cases}$$
Suppose that $J \in \mathcal{F}$
\begin{itemize}
    \item[A.]  either belongs to  $\mathcal{P}^{k_0}$,
    \item[B.] or  there is $(P_1,P_2)\in \mathcal{H}_Q$, with $Q\in \mathcal{P}^{k_0}$ such that either $J=P_1$ or $J=P_2$. 
\end{itemize} 
Let $A^{0}_J=|J|\delta_{x_J}$ and for $k> k_0$
$$A^i_J= \sum_{\substack{P\in \mathcal{P}^{k_0+i}\\ P\subset J}}  |P|\delta_{x_P} $$
for every $i> 0$. Then
$$A^{i+1}_J  =A^i_J + \sum_{\substack{R\in \mathcal{P}^{k_0+i}\\ R\cap J\neq \emptyset}} \sum_{\substack{(P_1,P_2)\in \mathcal{H}_R\\P_1\cup P_2\subset J }}  m_{(P_1,P_2)}(\delta_{x_{P_1}}-\delta_{x_{P_2}}) $$
\end{lemma}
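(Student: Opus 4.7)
The plan is to reduce everything to a single local identity at each grid cell $R \in \mathcal{P}$ and then aggregate over the $R$'s appearing in the right-hand side. The \emph{key identity} I intend to establish is: for every $R \in \mathcal{P}$ and every node $S$ of the binary tree on $\Omega_R$ underlying the construction of $\mathcal{H}_R$,
$$\sum_{P \in S} |P|\,\delta_{x_P} \;=\; |S|\,\delta_{x_S} \;+\; \sum_{\substack{(T_1,T_2) \in \mathcal{H}_R \\ T_1 \cup T_2 \subset S}} m_{(T_1,T_2)}\bigl(\delta_{x_{T_1}} - \delta_{x_{T_2}}\bigr),$$
where, as in Definition \ref{esquecer}, I identify each node of the tree with its union of cells, so that $x_S$ is well-defined.

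I would prove the key identity by induction on the depth of the subtree rooted at $S$. At the leaves $S=\{P\}$, both sides collapse to $|P|\delta_{x_P}$ since $x_{\{P\}} = x_P$. For the inductive step, if $S$ splits as $(S_1,S_2) \in \mathcal{H}_R$, adding the inductive hypothesis for $S_1$ and $S_2$ reduces the task to the one-step telescoping identity
$$|S_1|\,\delta_{x_{S_1}} + |S_2|\,\delta_{x_{S_2}} \;=\; |S|\,\delta_{x_S} + m_{(S_1,S_2)}\bigl(\delta_{x_{S_1}} - \delta_{x_{S_2}}\bigr).$$
Property (B) of the dipole basis forces $x_S$ to coincide with either $x_{S_1}$ or $x_{S_2}$, and the two-case definition of $m_{(S_1,S_2)}$ as $\pm|S_j|$ is tailored exactly so that in each alternative the identity becomes a one-line algebraic check.

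To derive the lemma, I use the nesting of the good grid: whenever $R$ sits at a level strictly finer than the $\mathcal{P}^{k_0+1}$-cells composing $J$, the condition $R \cap J \neq \emptyset$ is equivalent to $R \subset J$, and then the inclusion $T_1 \cup T_2 \subset R \subset J$ is automatic. Hence for $i \geq 1$ I would write
$$A^{i+1}_J - A^i_J \;=\; \sum_{\substack{R \in \mathcal{P}^{k_0+i} \\ R \subset J}} \Bigl(\sum_{P \in \Omega_R} |P|\,\delta_{x_P} - |R|\,\delta_{x_R}\Bigr)$$
and apply the key identity to each $R$ with $S = \Omega_R$ (the root of the tree of $\mathcal{H}_R$). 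For $i=0$, only one $R \in \mathcal{P}^{k_0}$ meets $J$: in case (A), $R = J$ and the full-tree identity at $S = \Omega_J$ yields the claim immediately; in case (B), $J = P_1$ for some $(P_1,P_2) \in \mathcal{H}_Q$ with $Q \in \mathcal{P}^{k_0}$, so $R = Q$, and I would apply the key identity to $R = Q$ with $S = P_1$. The restriction $T_1 \cup T_2 \subset J$ on the right-hand side of the lemma matches exactly the pairs sitting in the subtree of the binary tree of $\mathcal{H}_Q$ rooted at the node $P_1$.

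The main obstacle is the bookkeeping in case (B) at $i=0$: it is essential to use the subtree-localized form of the key identity at $S = P_1$ rather than the full-tree form at $Q$, and to keep in mind that the $P_1$ playing the role of $J$ is a node of the tree of $\mathcal{H}_Q$ rather than a cell of the grid. The algebraic core of the argument is the one-step telescoping identity, where the sign convention in the definition of $m_{(P_1,P_2)}$ fits precisely the two possibilities $x_S = x_{S_1}$ or $x_S = x_{S_2}$ permitted by property (B) of the dipole basis.
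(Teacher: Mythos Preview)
Your proposal is correct. The paper actually states Lemma~\ref{conta} without proof, so there is nothing to compare against; your argument supplies exactly the details one would expect. The key identity you isolate---the subtree-localized telescoping formula on the binary tree underlying $\mathcal{H}_R$---is the right organizing principle, and your induction on subtree depth together with the one-step identity
\[
|S_1|\,\delta_{x_{S_1}} + |S_2|\,\delta_{x_{S_2}} \;=\; |S|\,\delta_{x_S} + m_{(S_1,S_2)}\bigl(\delta_{x_{S_1}} - \delta_{x_{S_2}}\bigr)
\]
is precisely how the sign convention for $m_{(S_1,S_2)}$ is designed to work. Your handling of the two base cases $i=0$ (distinguishing $J\in\mathcal{P}^{k_0}$ from $J$ being a proper node of $\mathcal{H}_Q$) and the passage from ``$R\cap J\neq\emptyset$'' to ``$R\subset J$'' for $i\geq 1$ are both accurate.

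One small point worth making explicit in a final write-up: when you apply the key identity at the root $S=\Omega_R$, that node is not literally an element of $\mathcal{F}$ as defined, so $x_{\Omega_R}$ is not \emph{a priori} given by Definition~\ref{esquecer}. The identification you invoke (``each node with its union of cells'') amounts to setting $x_{\Omega_R}:=x_R$, where $x_R$ comes from the singleton leaf $\{R\}$ in the parent's tree (or from $I\in\mathcal{F}$ when $R=I$); property~(B) then still forces $x_{\Omega_R}\in\{x_{S_1},x_{S_2}\}$ for the top split $(S_1,S_2)\in\mathcal{H}_{R,0}$. This is harmless but deserves a sentence, since without it the inductive step at the root is not quite covered by the definition.
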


\begin{lemma}
     The sequence $A_J^i$ converges to $1_J$ in $(\vs)^\star$.
\end{lemma}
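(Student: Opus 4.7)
The plan is to first show that $(A_J^i)$ is a Cauchy sequence in the stronger norm $|\cdot|_{B^{-s}_{1,1}}$, and then identify its limit with $1_J$ by pairing against the test-function space $V_0$.

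For the Cauchy estimate, I would apply the recursion of Lemma \ref{conta} to write
\[
|A_J^{i+1}-A_J^i|_{B^{-s}_{1,1}} \leq \sum_{\substack{R\in\mathcal{P}^{k_0+i}\\ R\subset J}} \sum_{\substack{(P_1,P_2)\in\mathcal{H}_R\\ P_1\cup P_2\subset J}} |m_{(P_1,P_2)}|\cdot |\delta_{x_{P_1}}-\delta_{x_{P_2}}|_{B^{-s}_{1,1}}.
\]
By construction $|m_{(P_1,P_2)}|\leq |R|$; Lemma \ref{compnorms} gives $|\delta_{x_{P_1}}-\delta_{x_{P_2}}|_{B^{-s}_{1,1}}\leq C|R|^s$; and $|\mathcal{H}_R|\leq C_{GR}$. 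Combined with the good-grid estimate $|R|\leq \lambda^i|J|$ and $\sum_{R\subset J}|R|\leq |J|$, this yields $|A_J^{i+1}-A_J^i|_{B^{-s}_{1,1}} \leq C' \lambda^{is}|J|^{1+s}$, which is summable in $i$. Thus $A_J^i$ converges to some $L \in B^{-s}_{1,1}$, and \emph{a fortiori} in $(B^s_{\infty,\infty})^\star$.

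Next I would identify $L$ on test functions. For $\psi \in V_{0,N}$, $\psi$ is constant on each cell of $\mathcal{P}^{N+1}$, so for $i$ large enough that $k_0+i\geq N+1$, every $R\in \mathcal{P}^{k_0+i}$ with $R\subset J$ sits inside a cell where $\psi$ is constant. The description of $\delta_{x_R}$ as the $B^{-s}_{1,1}$-limit of the averages $1_{Q_j}/|Q_j|$ along the filtration $(Q_j)$ defining $x_R$ forces $\delta_{x_R}(\psi)=\psi|_R$ for all such $i$, and since $J$ is the disjoint union (up to measure zero) of these $R$'s,
\[
A_J^i(\psi) = \sum_{\substack{R\in \mathcal{P}^{k_0+i}\\ R\subset J}} |R|\,\psi|_R = \int_J \psi\, d\mu = 1_J(\psi).
\]
So $L$ and $1_J$ agree on $V_0$. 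Both $L$ and $1_J$ lie in $B^{-s}_{1,1}$ (the latter because $1_J\in V_0\subset B^{-s}_{1,1}$). By Theorem \ref{v0fecho}, $B^{-s}_{1,1}=(B^s_{\infty,\infty,o})^\star$, and $V_0$ is dense in $B^s_{\infty,\infty,o}$ by definition, so any two elements of $B^{-s}_{1,1}$ agreeing on $V_0$ must coincide. Hence $L=1_J$, and $A_J^i\to 1_J$ in $(B^s_{\infty,\infty})^\star$.

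I expect the main obstacle to be the Cauchy estimate: stitching the bound $|m_{(P_1,P_2)}|\leq |R|$, the dipole-norm bound of Lemma \ref{compnorms}, and the geometric decay from the good grid into a genuinely summable series requires keeping the combinatorial factor from $|\mathcal{H}_R|$ uniformly controlled by $C_{GR}$. Once this estimate is in hand, identifying the limit is straightforward.
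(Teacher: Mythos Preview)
Your proposal is correct and follows essentially the same strategy as the paper: a Cauchy estimate via the recursion of Lemma~\ref{conta} combined with the dipole bound of Lemma~\ref{compnorms}, followed by identification of the limit on a spanning set of test functions. The only cosmetic differences are that the paper first bounds $|A_J^{i+1}-A_J^i|$ in the $DC^s$ norm before invoking Lemma~\ref{compnorms}, and identifies the limit by testing against indicators $1_Q$ with a three-case analysis (then extending via the Haar representation~(\ref{rep})), whereas you test directly against a general $\psi\in V_{0,N}$ using local constancy and close via density in $B^s_{\infty,\infty,o}$; both routes are equivalent.
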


\begin{proof} Let
$M=\sup_{R\in \mathcal{P}}\#\mathcal{H}_R.$
By Lemma \ref{conta}
    \begin{align}
    %\label{diff_cauchy_riemann}
        \|A_J^{i+1}-A_J^i\|_{\dcsymb} &= \Big\| \sum_{\substack{R\in \mathcal{P}^{k_0+i}\\ R\cap J\neq \emptyset}} \sum_{\substack{(P_1,P_2)\in \mathcal{H}_R\\P_1\cup P_2\subset J }}  m_{(P_1,P_2)}(\delta_{x_{P_1}}-\delta_{x_{P_2}})\Big\|_{\dcsymb}\nonumber \\
        &\leq  M \sum_{\substack{R\in \mathcal{P}^{k_0+i}\\ R\cap J\neq \emptyset}} |R|^{1+s} \nonumber \\
        &\leq M \lambda^{si} |J|^s\sum_{\substack{R\in \mathcal{P}^{k_0+i}\\ R\cap J\neq \emptyset}} |R|\nonumber \\
       \label{dcs} &\leq C M \lambda^{si} |J|^{1+s}.
    \end{align}
    Lemma \ref{compnorms}  implies that $A^i_J$ is a Cauchy sequence in $\dcsymb$, $\mathcal{B}^{-s}_{1,1}$ and $(\vs)^\star$, so in particular it converges  to some distribution $\varphi \in \mathcal{B}^{-s}_{1,1}$.  We claim that $\varphi=1_J$ on $(\vs)^\star$. Indeed, note that 
  $$A_J^i (1_Q) \to \int_I 1_Q 1_J d\mu$$ for  every $Q \in \mathcal{P}$ by Theorem \ref{v0fecho}. Indeed
    \begin{enumerate}
        \item If $Q \cap J = \emptyset$, then $A_J^i (1_Q) = 0$, for all $i$ and, hence, \mbox{$A^i_J(1_Q) \to 0 =\int_I 1_Q 1_J d\mu$}.
        \item If $Q\subset J$, choose $n$ large enough so that $Q$ contains one of the sums from $A^n_J$. In this setting
    \[
    A^i_J (1_Q) = |Q|,
    \]
    for all $i \geq n$ and, hence $A_J^i (1_Q) \to 1_J$, as desired.
    \item If $J\subset Q$, then, for all $i$, $A^i_J(1_Q) = |J|$ and, hence, $A^i_J(1_Q) \to |Q\cap J|$.
    \end{enumerate}
It follows that 
$$\varphi(\phi_{(S_1,S_2)})=\int \phi_{(S_1,S_2)} 1_Q \ d\mu$$
 for every $(S_1,S_2)\in \mathcal{H}_R$ and  $R \in \mathcal{P}$.  But since $\varphi \in \mathcal{B}^{-s}_{1,1}$ it follows that if $\varphi$ has the representation (\ref{rep2}) then for every $\psi$ as in (\ref{seq}) we have that (\ref{rep}) implies

 \begin{align*} \varphi(\psi)&= c_I\int \psi \ d\mu+ \lim_N \sum\limits_{k=0}^N \sum\limits_{P \in \mathcal{P}^k} \sum\limits_{{(S_1,S_2)}\in \mathcal{H}_P} d_{(S_1,S_2)} c_{(S_1,S_2)} K_{(S_1,S_2)}\\
 &= c_I\int \psi \ d\mu+ \lim_N \varphi\big(\sum\limits_{k=0}^N \sum\limits_{P \in \mathcal{P}^k} \sum\limits_{{(S_1,S_2)}\in \mathcal{H}_P} c_{(S_1,S_2)} \phi_{(S_1,S_2)}\big)\\
  &= c_I\int \psi \ d\mu+  \lim_N \int 1_J  \Big(\sum\limits_{k=0}^N \sum\limits_{P \in \mathcal{P}^k} \sum\limits_{{(S_1,S_2)}\in \mathcal{H}_P} c_{(S_1,S_2)} \phi_{(S_1,S_2)} \Big) \ d\mu\\
  &=\int 1_J \psi \ d\mu.
 \end{align*} 
  
\end{proof}

%\begin{rmk}
 %   We can also get a bound for the $\dcsymb$ norm of $A_n-A_{n+1}$. Indeed, \eqref{diff_cauchy_riemann} tells us that
 %   \[
  %  \begin{array}{llll}
   %      |A_n-A_{n+1}|_{\dcsymb} \leq \sum\limits_{R \in A_n} |R_1| \cdot |R|^s& =&\sum\limits_{R \in A_n} |R_1| \cdot \left(\dfrac{|R|}{|Q|}\right)^s \cdot  |Q|^s \\ \\
    %     & \leq & \sum\limits_{R \in A_n} |R_1| \cdot \lambda^{(n/C) s} \cdot |Q|^s \\ \\
    %     &\leq & |Q|\cdot \lambda^{(n/C)s} \cdot |Q|^s.
    %\end{array}
   % \]
%\end{rmk}
\begin{cor} 
\label{bs C DC continouous} There is $C> 0$, that does not depend on the chosen dipole basis, such that the following holds.
     Let $Q \in \mathcal{P}$ and $(Q_1,Q_2) \in \mathcal{H}_Q$. Then
    \[
    \|\phi_{(Q_1,Q_2)}\|_{\dcsymb} \leq C\cdot \|\phi_{(Q_1,Q_2)}\|_\bs.
    \]
\end{cor}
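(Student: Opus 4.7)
The plan is to build the dipole representation of $\phi_{(Q_1,Q_2)}$ by iterating the Riemann-sum construction from Lemma \ref{conta} simultaneously on each of the two pieces that constitute the Haar wavelet. Writing $S_j=\bigcup_{P\in Q_j} P$ for $j=1,2$, we have
\[
\phi_{(Q_1,Q_2)}=\frac{1_{S_1}}{|S_1|}-\frac{1_{S_2}}{|S_2|},
\]
and both $S_1,S_2\in\mathcal{F}$ are legitimate choices of $J$ for the preceding lemma (with $k_0$ the level of $Q$). Define the sequence of finite configurations
\[
B^i:=\frac{A^i_{S_1}}{|S_1|}-\frac{A^i_{S_2}}{|S_2|},
\]
so that $B^0=\delta_{x_{S_1}}-\delta_{x_{S_2}}$ is \emph{precisely} the element of the chosen dipole basis indexed by the pair $(S_1,S_2)\in\mathcal{H}_Q$, and by the previous lemma $B^i\to \phi_{(Q_1,Q_2)}$ in $(B^{s}_{\infty,\infty})^\star$.

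Next I would exploit that, by Lemma \ref{conta}, each increment $A^{i+1}_{S_j}-A^i_{S_j}$ is a (finite) combination of dipoles from the basis, all of which are supported inside $S_j$. Since $S_1$ and $S_2$ are disjoint, the dipoles appearing in the two contributions to $B^{i+1}-B^i$ are indexed by disjoint subsets of the dipole basis, so the $DC^s$-norm is additive and the bound \eqref{dcs} applied to each summand gives
\[
|B^{i+1}-B^i|_{DC^s}\ \le\ \frac{1}{|S_1|}\,CM\lambda^{si}|S_1|^{1+s}+\frac{1}{|S_2|}\,CM\lambda^{si}|S_2|^{1+s}\ \le\ 2CM\lambda^{si}|Q|^s.
\]
In particular $(B^i)$ is Cauchy in $DC^s$, so its $(B^{s}_{\infty,\infty})^\star$-limit $\phi_{(Q_1,Q_2)}$ belongs to $DC^s$ and inherits the telescoping representation $B^0+\sum_i (B^{i+1}-B^i)$.

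Finally, estimating the $DC^s$-norm of this representation,
\[
|\phi_{(Q_1,Q_2)}|_{DC^s}\ \le\ |B^0|_{DC^s}+\sum_{i=0}^{\infty}|B^{i+1}-B^i|_{DC^s}\ \le\ |Q|^s+\frac{2CM}{1-\lambda^s}|Q|^s\ =\ C''|Q|^s,
\]
where $|B^0|_{DC^s}=|Q|^s$ because $(S_1,S_2)\in\mathcal{H}_Q$ and the coefficient is $1$. Since it was already observed (in the proof of Theorem \ref{thm2}) that $|\phi_{(Q_1,Q_2)}|_{B^{-s}_{1,1}}=|Q|^s$, this gives the desired inequality with a constant $C=C''$ that depends only on $\lambda$, $s$ and $C_{GR}$, and in particular not on the chosen dipole basis.

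The only non-routine step is justifying that $B^0$ is indeed the dipole from the basis at level $Q$, which in turn requires tracking carefully that $A^0_{S_j}=|S_j|\delta_{x_{S_j}}$ uses \emph{exactly} the representative $x_{S_j}$ fixed by Definition \ref{esquecer}; once this bookkeeping is in place, everything else reduces to applying Lemma \ref{conta} and inequality \eqref{dcs} term by term.
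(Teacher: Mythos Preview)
Your proof is correct and follows essentially the same route as the paper: write $\phi_{(Q_1,Q_2)}$ as the initial dipole $\delta_{x_{Q_1}}-\delta_{x_{Q_2}}$ plus the two telescoping tails $\frac{1}{|Q_j|}\sum_i (A^{i+1}_{Q_j}-A^i_{Q_j})$, and bound each tail using the estimate \eqref{dcs}. Your explicit remark that the dipoles coming from the $S_1$- and $S_2$-tails are indexed by disjoint subsets of the basis (so the $DC^s$-norm is additive on the increments) is a detail the paper leaves implicit, but otherwise the arguments coincide.
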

\begin{proof} By previous lemmas 
   % \[
   % {{1}}_{Q_i} = |Q_i|\cdot \delta_{x_{Q_i}}+\sum\limits_{n}A_n^i-A_{n+1}^i.
   % \]

    \begin{align*}
    \label{wavelets are dipoles}
\phi_{(Q_1,Q_2)}&=\dfrac{1_{Q_1}}{|{Q_1}|} - \dfrac{1_{Q_2}}{|{Q_2}|} \\
&= (\delta_{x_{Q_1}}-\delta_{x_{Q_2}})+ \dfrac{1}{|Q_1|}\sum_{i=0}^\infty (A^{i+1}_{Q_1}-A^{i}_{Q_1}) - \dfrac{1}{|Q_2|}\sum_{i=0}^\infty (A^{i+1}_{Q_2}-A^{i}_{Q_2}) 
    \end{align*}
and, hence, by (\ref{dcs})
\[
\|\phi_{(Q_1,Q_2)}\|_{\dcsymb} \leq C |Q|^s=C \|\phi_{(Q_1,Q_2)}\|_{\mathcal{B}^{-s}_{1,1}}
\]
\end{proof}

%\section{$\mathcal{B}^{-s}_{1,1}$ is made of dipole configurations}

\begin{proof}[Proof of Theorem \ref{main result}]
It is enough to prove the following continuous inclusions
$$\pssymb \subset   \bs \subset \dcsymb \subset \pssymb. $$
 The inclusion $\pssymb \subset \bs$ holds since $\bs$ is a Banach space and the elements of $\pssymb$ are limits of sequences in $\bs$. Moreover the inclusion is continuous by the equivalence of the $\bs$ norm and the one from $(\vs)^\star$ (by Proposition \ref{eq}).

    The inclusion $\dcsymb \subset \pssymb$ holds because dipoles are a particular example of finite configuration of particles. Moreover, let $\gamma \in \dcsymb$. Then
    \[
    \gamma = m_0 \delta_I + \sum\limits_{k\geq 0} \sum \limits_{P \in \mathcal{P}^k} \sum \limits_{(P_1,P_2) \in \mathcal{H}_P} m_{(P_1,P_2)} (\delta_{x_{P_1}}-\delta_{x_{P_2}}).
    \]
    Therefore, 
    \begin{align*}
         \|\gamma\|_{\bs} &\leq |m_0| \cdot  \|\delta_{x_I}\|_\bs + \displaystyle \sum\limits_{k\geq 0} \sum \limits_{P \in \mathcal{P}^k} \sum \limits_{(P_1,P_2) \in \mathcal{H}_P} |m_{(P_1,P_2)}| \cdot \|\delta_{x_{P_1}}-\delta_{x_{P_2}}\|_\bs  \\ 
         &\leq |m_0| \cdot  \|\delta_{x_I}\|_\bs + \displaystyle \sum\limits_{k\geq 0} \sum \limits_{P \in \mathcal{P}^k} \sum \limits_{(P_1,P_2) \in \mathcal{H}_P} |m_{(P_1,P_2)}| \cdot C |Q|^s \\ 
         &\leq C_2 \|\gamma\|_{\dcsymb},
    \end{align*}
    
    \noindent where $C_2 = \max \{C, \|\delta_{x_I}\|_\bs \}$. Hence, the inclusion is continuous.

 The continous inclusion $\bs \subset \dcsymb$ holds because, by Corollary \ref{bs C DC continouous},  unbalanced Haar wavelets are dipole configurations with uniformly bounded $\dcsymb$-norm.  Moreover, if $\gamma \in \bs$,
    \[
    \gamma = \sum_{k=0}^\infty \sum_{Q \in \mathcal{P}^k} \sum_{{(S_1,S_2)}\in \mathcal{H}_Q} c_{(S_1,S_2)} \phi_{(S_1,S_2)}.
    \]
    Therefore,
   \begin{align*}
   \|\gamma\|_{\dcsymb} &\leq \displaystyle \sum_{k=0}^\infty   \sum_{Q \in \mathcal{P}^k} \sum_{{(S_1,S_2)}\in \mathcal{H}_Q} |c_{(S_1,S_2)}| \cdot \|\phi_{(S_1,S_2)}\|_{\dcsymb} \\ 
         & \leq \displaystyle \sum_{k=0}^\infty   \sum_{Q \in \mathcal{P}^k} \sum_{{(S_1,S_2)}\in \mathcal{H}_Q} |c_{(S_1,S_2)}| \cdot C |Q|^s \\
         &\leq C\|\gamma\|_\bs,
    \end{align*}
    so the inclusion is continuous. This completes the proof.
\end{proof}

\begin{cor}[Theorem B-\ref{thmb2}] Every dipole basis is a unconditional Schauder basis of $\mathcal{B}^{-s}_{1,1}$.
\end{cor}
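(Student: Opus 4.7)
The entire statement is an almost immediate consequence of Theorem \ref{main result} once one observes that $DC^s$ is essentially a weighted $\ell^1$-space with the dipole basis as its standard unit-vector basis. I would structure the argument in two short steps: first establish that we have a Schauder basis in some fixed order, and then invoke the sign-test characterization of unconditionality.

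\textbf{Step 1 (Schauder basis structure).} Fix any enumeration of the dipole basis $\mathcal{B} = \{\delta_I\} \cup \bigcup_{Q \in \mathcal{P}} \bigcup_{(P_1,P_2) \in \mathcal{H}_Q}\{\delta_{x_{P_1}} - \delta_{x_{P_2}}\}$ that is compatible with the grid (e.g., sweeping through scales $k=0,1,2,\dots$, then through $P\in\mathcal{P}^k$, then through $(P_1,P_2)\in \mathcal{H}_P$). By Theorem \ref{main result}, every $\varphi \in B^{-s}_{1,1}$ admits a $DC^s$-representation
$$\varphi = m_0\delta_I + \sum_{k\geq 0}\sum_{P\in\mathcal{P}^k}\sum_{(P_1,P_2)\in \mathcal{H}_P} m_{(P_1,P_2)}(\delta_{x_{P_1}}-\delta_{x_{P_2}}),$$
and the coefficients $m_0,\, m_{(P_1,P_2)}$ are uniquely determined (remark following Definition \ref{dyp-config-defi}). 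To see that the partial sums converge to $\varphi$ in $B^{-s}_{1,1}$, I would note that by Lemma \ref{compnorms} combined with the equivalence $|\cdot|_{DC^s}\asymp |\cdot|_{B^{-s}_{1,1}}$ the $B^{-s}_{1,1}$-norm of any tail is controlled by the corresponding tail of the nonnegative summable series $|m_0|+\sum_{k,P,(P_1,P_2)} |P|^s|m_{(P_1,P_2)}|$, hence goes to zero.

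\textbf{Step 2 (Unconditionality).} I would apply the standard sign-test. Given any choice of signs $\epsilon_\alpha \in \{-1,+1\}$ indexed by $\alpha \in \mathcal{B}$, let $\varphi^\epsilon$ denote the distribution obtained from $\varphi$ by multiplying each coefficient in its dipole expansion by $\epsilon_\alpha$. The crucial and entirely formal observation is that the $DC^s$-norm depends only on the absolute values of the coefficients, so $|\varphi^\epsilon|_{DC^s} = |\varphi|_{DC^s}$. Chaining this with Theorem \ref{main result} gives
$$|\varphi^\epsilon|_{B^{-s}_{1,1}} \leq C |\varphi^\epsilon|_{DC^s} = C|\varphi|_{DC^s} \leq C^2 |\varphi|_{B^{-s}_{1,1}},$$
which is precisely the uniform sign-boundedness characterizing unconditional bases. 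Combined with Step 1, this yields unconditional convergence of the dipole expansion in any order.

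\textbf{Main obstacle.} There is essentially no obstacle beyond invoking Theorem \ref{main result}; the heavy lifting (existence, uniqueness, and norm equivalence of the dipole representation) has been carried out there. The only point worth making explicit is that the convergence established in Step 1 is \emph{a priori} dependent on the fixed ordering, but Step 2 removes that dependence, so the result is robust under any reordering of $\mathcal{B}$.
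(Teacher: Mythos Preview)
Your proposal is correct and follows exactly the line the paper has in mind: the corollary is stated without proof because Theorem \ref{main result} identifies $(B^{-s}_{1,1},|\cdot|_{B^{-s}_{1,1}})$ with $(DC^s,|\cdot|_{DC^s})$, and the latter is manifestly a weighted $\ell^1$ space whose canonical basis is the dipole basis, so unconditionality is immediate from the norm equivalence and the remark on uniqueness of coefficients. Your Steps 1 and 2 simply make this explicit.
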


\begin{cor}\label{dipolenorm} There is a constant $C> 1$ such that the following holds. If $x,y \in \hat{I}$, with $x,y \in P\in \mathcal{P}^n$ and $x \in Q_1\in \mathcal{P}^{n+1}$, $y \in Q_2\in \mathcal{P}^{n+1}$, with $Q_1\neq Q_2$ then
$$\frac{1}{C}  |P|^s \leq  \|\delta_x - \delta_y\|_{\mathcal{B}^{-s}_{1,1}} \leq C |P|^s.$$
In particular $\|\delta_x - \delta_y\|_{\mathcal{B}^{-s}_{1,1}}\sim \hat{d}(x,y)^s$.
\end{cor}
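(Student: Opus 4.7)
The plan is to establish the two inequalities separately and then observe the comparison with $\hat d(x,y)$ at the end.

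For the upper bound $|\delta_x - \delta_y|_{B^{-s}_{1,1}} \leq C|P|^s$, I would revisit the proof of Lemma \ref{compnorms}: writing $x = (Q_k^1)_{k\geq 0}$ and $y = (Q_k^2)_{k\geq 0}$, the hypotheses $x,y\in P\in\mathcal{P}^n$ and $x\in Q_1 \neq Q_2 \ni y$ mean precisely that $Q_k^1 = Q_k^2$ for $k\leq n$ and that the two paths diverge at level $n+1$. Crucially, the proof of Lemma \ref{compnorms} never used that $x_{Q_1}, x_{Q_2}$ come from a dipole basis; it used only this path coincidence structure. So plugging our $x,y$ into formula \eqref{dirac-nonbinary-expression}, subtracting, and noting that all terms with $k<n$ cancel, one obtains the same telescoping tail whose $B^{-s}_{1,1}$ norm is dominated by a geometric series with ratio $\lambda^s$ times $|P|^s$, yielding $|\delta_x - \delta_y|_{B^{-s}_{1,1}} \leq C|P|^s$.

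For the lower bound I would use duality. By Proposition \ref{eq} the norms $|\cdot|_{B^{-s}_{1,1}}$ and $|\cdot|_{(B^s_{\infty,\infty})^\star}$ are equivalent on $B^{-s}_{1,1}$, so it suffices to exhibit a test function on which $\delta_x-\delta_y$ is pinned down. I would take $\psi = 1_{Q_1} \in V_0 \subset B^s_{\infty,\infty}$. From Definition \ref{dirac-defi}, $\delta_x(1_{Q_1}) = 1$ and $\delta_y(1_{Q_1}) = 0$, so $(\delta_x-\delta_y)(1_{Q_1}) = 1$. The main estimate is then $|1_{Q_1}|_{B^s_{\infty,\infty}} \leq C'|P|^{-s}$: using the $C^s(I)$-characterization from Theorem \ref{thm1}, any jump $1_{Q_1}(u) \neq 1_{Q_1}(v)$ forces the smallest grid element containing both $u$ and $v$ to be $P$ or a larger ancestor, so $d(u,v) \geq |P|$. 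This bounds the Hölder seminorm by $|P|^{-s}$ and the $L^\infty$ term by $1 \leq |P|^{-s}$. Duality then gives $|\delta_x - \delta_y|_{B^{-s}_{1,1}} \geq c|P|^s$.

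Combining the two bounds yields the stated equivalence, and the final comparison $|\delta_x-\delta_y|_{B^{-s}_{1,1}}\sim \hat d(x,y)^s$ is immediate from $\hat d(x,y) = |P|$, via the analogue of Definition \ref{metric} on $\hat I$ mentioned in the remark after Definition \ref{dirac-defi}. I expect the main technical point—really a verification rather than an obstacle—to be making sure that the computation of Lemma \ref{compnorms} applies verbatim to arbitrary $x,y \in \hat I$ with the prescribed coincidence pattern; this is a careful re-reading of that proof, not new work.
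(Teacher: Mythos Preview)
Your proof is correct but takes a genuinely different route from the paper's. The paper observes that any dipole $\delta_x - \delta_y$ satisfying the hypotheses can be made an element of \emph{some} dipole basis (by choosing the points $x_{P_1}, x_{P_2}$ appropriately in Definition \ref{esquecer}), so that in that basis $|\delta_x - \delta_y|_{DC^s} = |P|^s$; both inequalities then follow at once from Theorem \ref{main result}, whose constants are independent of the chosen dipole basis. Your argument instead bypasses Theorem \ref{main result} entirely: for the upper bound you re-run the geometric-series estimate of Lemma \ref{compnorms} (which indeed never uses the dipole-basis property, only the coincidence of paths up to level $n$), and for the lower bound you pair against the explicit test function $1_{Q_1}$ and control $|1_{Q_1}|_{B^s_{\infty,\infty}}$ via the H\"older characterization. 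Your approach is more elementary and self-contained---it shows the corollary does not logically depend on the main theorem---while the paper's approach is much shorter given the machinery already in place.
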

\begin{proof} It is easy to see that every dipole  like above belongs to a dipole basis. Since in this basis $\|\delta_x - \delta_y\|_{\dcsymb}=|Q|^s$, the Corollary follows from Theorem \ref{main result}.
\end{proof}

 \section{Atomic decomposition of $\mathcal{B}^{-s}_{1,1}$ through Diracs and Dipoles}
 \label{sec:diracdipole}

 Given $\varphi \in \mathcal{B}^{-s}_{1,1}$, we say that 

\begin{equation}\label{ddrep} \varphi = \sum_{i=0}^\infty c_i\delta_{z_i} +\sum_{j=0}^\infty b_j(\delta_{y_j}-\delta_{x_j})\end{equation}
 is an {\it Dirac-Dipole atomic representation} of $\varphi$ if both series converges in $\mathcal{B}^{-s}_{1,1}$ and 
 $$\sum_{i=0}^\infty |c_i|  +\sum_{j=0}^\infty |b_j|\hat{d}(x_j,y_j)^s < \infty.$$
This  is the {\it cost} of this representation. 
 Note that due Corollary \ref{dipolenorm} this implies that the r.h.s. of (\ref{ddrep}) indeed converges unconditionally in $\mathcal{B}^{-s}_{1,1}$. Due Theorem \ref{main result} every element of $\mathcal{B}^{-s}_{1,1}$ has a Dirac-Dipole representation.

 Define $\|\varphi\|_{\ddsymb}$ as the infimum over the costs of all possible  Dirac-Dipole atomic representations of $\phi$.

\begin{cor}[Theorem B-\ref{thmb3}: Dirac-Dipole Atomic decomposition]
The norms $\|\cdot\|_{\mathcal{B}^{-s}_{1,1}}
$ and $\|\cdot\|_{\ddsymb}$ are equivalent on $\mathcal{B}^{-s}_{1,1}$.
\end{cor}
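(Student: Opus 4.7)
The plan is to establish the two inequalities separately, leveraging the equivalence $|\cdot|_{B^{-s}_{1,1}} \sim |\cdot|_{DC^s}$ from Theorem \ref{main result} and the dipole-norm comparison from Corollary \ref{dipolenorm}. Because of Corollary \ref{dipolenorm}, the cost $\sum|c_i|+\sum|b_j|\hat d(x_j,y_j)^s$ and the quantity $\sum|c_i|+\sum|b_j|\,|\delta_{y_j}-\delta_{x_j}|_{B^{-s}_{1,1}}$ are equivalent, so I will work with whichever is convenient.

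For the easy direction $|\varphi|_{B^{-s}_{1,1}}\leq C|\varphi|_{DD^s}$, I apply the triangle inequality to an arbitrary Dirac--Dipole representation (\ref{ddrep}). The only nontrivial input is that Dirac masses are uniformly bounded in $B^{-s}_{1,1}$, that is, $|\delta_z|_{B^{-s}_{1,1}}\leq C_0$ for every $z\in\hat I$. This is immediate from Lemma \ref{generalized-diracs-lemma}, which writes $\delta_z=1_I+\sum_{j\geq 0}(\phi_{j+1}-\phi_j)$ with $|1_I|_{B^{-s}_{1,1}}=1$ and the increments bounded by $C\lambda^j$, hence summable. Then
\[
|\varphi|_{B^{-s}_{1,1}}\leq C_0\sum_i|c_i|+\sum_j|b_j|\,|\delta_{y_j}-\delta_{x_j}|_{B^{-s}_{1,1}}\leq \max(C_0,1)\cdot\mathrm{cost},
\]
and taking the infimum over Dirac--Dipole representations concludes this direction.

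For the harder direction $|\varphi|_{DD^s}\leq C|\varphi|_{B^{-s}_{1,1}}$, the plan is to exhibit one specific Dirac--Dipole representation whose cost is controlled by $|\varphi|_{B^{-s}_{1,1}}$. Fix any dipole basis and invoke Theorem \ref{main result}: every $\varphi\in B^{-s}_{1,1}$ admits the $DC^s$-representation
\[
\varphi=m_0\,\delta_I+\sum_{k\geq 0}\sum_{P\in\mathcal P^k}\sum_{(P_1,P_2)\in\mathcal H_P} m_{(P_1,P_2)}(\delta_{x_{P_1}}-\delta_{x_{P_2}}),
\]
with $|m_0|+\sum|P|^s|m_{(P_1,P_2)}|\leq C|\varphi|_{B^{-s}_{1,1}}$, and the series converges in $B^{-s}_{1,1}$. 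This is already a Dirac--Dipole representation of the form (\ref{ddrep}), with a single Dirac $m_0\delta_I$ and the dipoles $(\delta_{x_{P_1}}-\delta_{x_{P_2}})$. Since $x_{P_1}$ and $x_{P_2}$ lie in distinct children of $P\in\mathcal P^k$, Corollary \ref{dipolenorm} gives $|\delta_{x_{P_1}}-\delta_{x_{P_2}}|_{B^{-s}_{1,1}}\leq C|P|^s$. Therefore the cost of this representation is bounded by
\[
|m_0|+C\sum|P|^s|m_{(P_1,P_2)}|\leq C'\,|\varphi|_{DC^s}\leq C''\,|\varphi|_{B^{-s}_{1,1}},
\]
which yields $|\varphi|_{DD^s}\leq C''|\varphi|_{B^{-s}_{1,1}}$.

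I do not expect any serious obstacle: the substantive work has already been done in Theorem \ref{main result} and Corollary \ref{dipolenorm}, and this corollary is essentially a restatement passing from a fixed dipole basis to arbitrary Dirac--Dipole combinations. The only minor point to verify carefully is the unconditional convergence of the two series in (\ref{ddrep}), which for the specific representation produced above follows from Proposition \ref{eq} together with the finite $DC^s$-norm.
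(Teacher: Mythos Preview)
Your proposal is correct and follows essentially the same approach as the paper's proof: Corollary \ref{dipolenorm} plus the triangle inequality for the bound $|\varphi|_{B^{-s}_{1,1}}\leq C|\varphi|_{DD^s}$, and Theorem \ref{main result} (fixing a dipole basis and using the $DC^s$ representation as a particular Dirac--Dipole representation) for the reverse bound. If anything you are slightly more careful than the paper, since you make explicit the uniform bound $|\delta_z|_{B^{-s}_{1,1}}\leq C_0$ on Dirac masses that the paper's one-line invocation of Corollary \ref{dipolenorm} tacitly uses.
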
    
\begin{proof} By Corollary \ref{dipolenorm} there is $C$ such that 
$\|\varphi\|_{\mathcal{B}^{-s}_{1,1}}\leq C \|\varphi\|_{\ddsymb}$ for $\varphi \in \mathcal{B}^{-s}_{1,1}$. On the other hand we can chose a dipole basis and by Theorem \ref{main result} the opposite inequality holds.
\end{proof}

\bibliographystyle{plain}
%\nocite{*}
\bibliography{references.bib}

@article{per2,
 author = {Han, Yanchang and Han, Yongsheng and He, Ziyi and Li, Ji and Pereyra, Cristina},
 title = {Geometric characterizations of embedding theorems: for {Sobolev}, {Besov}, and {Triebel}-{Lizorkin} spaces on spaces of homogeneous type -- via orthonormal wavelets},
 fjournal = {The Journal of Geometric Analysis},
 journal = {J. Geom. Anal.},
 issn = {1050-6926},
 volume = {31},
 number = {9},
 pages = {8947--8978},
 year = {2021},
 language = {English},
 doi = {10.1007/s12220-020-00536-6},
 keywords = {46E36,42B35,43A85,42B25,46B15},
 zbMATH = {7388948},
 Zbl = {1491.46026}
}

@misc{aniso,
      title={Anisotropic spaces for the bilateral shift}, 
      author={Mateus Marra and Daniel Smania},
      year={2024},
      eprint={2411.15050},
      archivePrefix={arXiv},
      primaryClass={math.DS},
      url={https://arxiv.org/abs/2411.15050}, 
}

@article{per,
 author = {Kairema, Anna and Li, Ji and Pereyra, M. Cristina and Ward, Lesley A.},
 title = {Haar bases on quasi-metric measure spaces, and dyadic structure theorems for function spaces on product spaces of homogeneous type},
 fjournal = {Journal of Functional Analysis},
 journal = {J. Funct. Anal.},
 issn = {0022-1236},
 volume = {271},
 number = {7},
 pages = {1793--1843},
 year = {2016},
 language = {English},
 doi = {10.1016/j.jfa.2016.05.002},
 keywords = {42B35,42B30,42B25,42C40,30L99},
 zbMATH = {6617233},
 Zbl = {1347.42040}
}

@article{besov_original,
 author = {Besov, O. V.},
 title = {On some families of functional spaces. {Imbedding} and continuation theorems},
 fjournal = {Doklady Akademii Nauk SSSR},
 journal = {Dokl. Akad. Nauk SSSR},
 issn = {0002-3264},
 volume = {126},
 pages = {1163--1165},
 year = {1959},
 language = {Russian},
 zbMATH = {3158244},
 Zbl = {0097.09701}
}

@article{wilson_hardy,
 author = {Wilson, J. Michael},
 title = {On the atomic decomposition for {Hardy} spaces},
 fjournal = {Pacific Journal of Mathematics},
 journal = {Pac. J. Math.},
 issn = {1945-5844},
 volume = {116},
 pages = {201--207},
 year = {1985},
 language = {English},
 doi = {10.2140/pjm.1985.116.201},
 keywords = {42B30,42B25},
 zbMATH = {3897552},
 Zbl = {0563.42012}
}

@article{yangdachun_fractals,
 author = {Yang, Dachun},
 title = {Besov spaces on spaces of homogeneous type and fractals},
 fjournal = {Studia Mathematica},
 journal = {Stud. Math.},
 issn = {0039-3223},
 volume = {156},
 number = {1},
 pages = {15--30},
 year = {2003},
 language = {English},
 doi = {10.4064/sm156-1-2},
 keywords = {46E35,28A80,42B35},
 zbMATH = {1927642},
 Zbl = {1023.46035}
}

@article{hanluyang,
 author = {Han, Yongsheng and Lu, Shanzhen and Yang, Dachun},
 title = {Inhomogeneous {Besov} and {Triebel}-{Lizorkin} spaces on spaces of homgeneous type},
 fjournal = {Approximation Theory and its Applications},
 journal = {Approximation Theory Appl.},
 issn = {1000-9221},
 volume = {15},
 number = {3},
 pages = {37--65},
 year = {1999},
 language = {English},
 keywords = {46E35,47G10,42B20,46A20,46B70},
 zbMATH = {1383576},
 Zbl = {0957.46025}
}

@article {souza2,
    AUTHOR = {De Souza, Geraldo Soares and O'Neil, Richard and Sampson, G.},
     TITLE = {Several characterizations for the special atom spaces with
              applications},
   JOURNAL = {Rev. Mat. Iberoamericana},
  FJOURNAL = {Revista Matem\'atica Iberoamericana},
    VOLUME = {2},
      YEAR = {1986},
    NUMBER = {3},
     PAGES = {333--355},
      ISSN = {0213-2230},
   MRCLASS = {46E30 (42A24)},
  MRNUMBER = {908057},
MRREVIEWER = {Daniel\ H.\ Luecking},
       DOI = {10.4171/RMI/37},
       URL = {https://doi.org/10.4171/RMI/37},
}

@article {oswald,
    AUTHOR = {Oswald, P.},
     TITLE = {Multivariate {H}aar systems in {B}esov function spaces},
   JOURNAL = {Mat. Sb.},
  FJOURNAL = {Matematicheski\u i\ Sbornik},
    VOLUME = {212},
      YEAR = {2021},
    NUMBER = {6},
     PAGES = {73--108},
      ISSN = {0368-8666,2305-2783},
   MRCLASS = {42C40 (41A63 46E35)},
  MRNUMBER = {4265429},
       DOI = {10.4213/sm9398},
       URL = {https://doi.org/10.4213/sm9398},
}

@article {triebel,
    AUTHOR = {Triebel, Hans},
     TITLE = {On {H}aar bases in {B}esov spaces},
   JOURNAL = {Serdica},
  FJOURNAL = {Serdica. Bulgaricae Mathematicae Publicationes},
    VOLUME = {4},
      YEAR = {1978},
    NUMBER = {4},
     PAGES = {330--343},
      ISSN = {0204-4110},
   MRCLASS = {46E35},
  MRNUMBER = {543155},
MRREVIEWER = {Alberto\ Torchinsky},
}

@incollection {dya2,
    AUTHOR = {Pereyra, Mar\'ia Cristina},
     TITLE = {Lecture notes on dyadic harmonic analysis},
 BOOKTITLE = {Second {S}ummer {S}chool in {A}nalysis and {M}athematical
              {P}hysics ({C}uernavaca, 2000)},
    SERIES = {Contemp. Math.},
    VOLUME = {289},
     PAGES = {1--60},
 PUBLISHER = {Amer. Math. Soc., Providence, RI},
      YEAR = {2001},
      ISBN = {0-8218-2708-1},
   MRCLASS = {42B25 (42-02 42B20 47B38)},
  MRNUMBER = {1864538},
MRREVIEWER = {Oscar\ Blasco},
       DOI = {10.1090/conm/289/04874},
       URL = {https://doi.org/10.1090/conm/289/04874},
}

@article {dya,
    AUTHOR = {L\'opez-S\'anchez, Luis Daniel and Martell, Jos\'e{} Mar\'ia
              and Parcet, Javier},
     TITLE = {Dyadic harmonic analysis beyond doubling measures},
   JOURNAL = {Adv. Math.},
  FJOURNAL = {Advances in Mathematics},
    VOLUME = {267},
      YEAR = {2014},
     PAGES = {44--93},
      ISSN = {0001-8708,1090-2082},
   MRCLASS = {42B20 (42B25 42C10 42C40)},
  MRNUMBER = {3269175},
MRREVIEWER = {\'Arp\'ad\ B\'enyi},
       DOI = {10.1016/j.aim.2014.08.001},
       URL = {https://doi.org/10.1016/j.aim.2014.08.001},
}

@incollection {pereyra,
    AUTHOR = {Pereyra, Mar\'ia Cristina},
     TITLE = {Dyadic harmonic analysis and weighted inequalities: the sparse
              revolution},
 BOOKTITLE = {New trends in applied harmonic analysis. {V}ol. 2---harmonic
              analysis, geometric measure theory, and applications},
    SERIES = {Appl. Numer. Harmon. Anal.},
     PAGES = {159--239},
 PUBLISHER = {Birkh\"auser/Springer, Cham},
      YEAR = {[2019] \copyright 2019},
      ISBN = {978-3-030-32352-3; 978-3-030-32353-0},
   MRCLASS = {42B20},
  MRNUMBER = {4311199},
       DOI = {10.1007/978-3-030-32353-0\_7},
       URL = {https://doi.org/10.1007/978-3-030-32353-0_7},
}

@article{frazier,
 ISSN = {00222518, 19435258},
 URL = {http://www.jstor.org/stable/24893969},
 author = {Michael Frazier and Björn Jawerth},
 journal = {Indiana University Mathematics Journal},
 number = {4},
 pages = {777--799},
 publisher = {Indiana University Mathematics Department},
 title = {Decomposition of {B}esov Spaces},
 urldate = {2024-04-23},
 volume = {34},
 year = {1985}
}

@article{souza-atomicdec,
 ISSN = {00029939, 10886826},
 URL = {http://www.jstor.org/stable/2044886},
 author = {Geraldo Soares de Souza},
 journal = {Proceedings of the American Mathematical Society},
 number = {4},
 pages = {682--686},
 publisher = {American Mathematical Society},
 title = {The Atomic Decomposition of {B}esov-{B}ergman-{L}ipschitz Spaces},
 urldate = {2024-04-23},
 volume = {94},
 year = {1985}
}

@article {Smania_2020,
    AUTHOR = {Smania, Daniel},
     TITLE = {Classic and exotic {B}esov spaces induced by good grids},
   JOURNAL = {J. Geom. Anal.},
  FJOURNAL = {Journal of Geometric Analysis},
    VOLUME = {31},
      YEAR = {2021},
    NUMBER = {3},
     PAGES = {2481--2524},
      ISSN = {1050-6926,1559-002X},
   MRCLASS = {42B35 (30H25 30L05 42C15 42C40)},
  MRNUMBER = {4225815},
MRREVIEWER = {Junming\ Liu},
       DOI = {10.1007/s12220-020-00361-x},
       URL = {https://doi.org/10.1007/s12220-020-00361-x},
}

@article {smania_2022-PDE,
    AUTHOR = {Smania, Daniel},
     TITLE = {Besov-ish spaces through atomic decomposition},
   JOURNAL = {Anal. PDE},
  FJOURNAL = {Analysis \& PDE},
    VOLUME = {15},
      YEAR = {2022},
    NUMBER = {1},
     PAGES = {123--174},
      ISSN = {2157-5045,1948-206X},
   MRCLASS = {43A99 (28C99 30H25 42B15 42B35 42C40 43A85)},
  MRNUMBER = {4395155},
MRREVIEWER = {Seungly\ Oh},
       DOI = {10.2140/apde.2022.15.123},
       URL = {https://doi.org/10.2140/apde.2022.15.123},
}

@article {Sweldens1997,
    AUTHOR = {Girardi, Maria and Sweldens, Wim},
     TITLE = { {A} new class of unbalanced {H}aar wavelets that form an
              unconditional basis for {$L_p$} on general measure spaces},
   JOURNAL = {J. Fourier Anal. Appl.},
  FJOURNAL = {The Journal of Fourier Analysis and Applications},
    VOLUME = {3},
      YEAR = {1997},
    NUMBER = {4},
     PAGES = {457--474},
      ISSN = {1069-5869,1531-5851},
   MRCLASS = {42C15 (42C10)},
  MRNUMBER = {1468375},
MRREVIEWER = {I.\ Ya.\ Novikov},
       DOI = {10.1007/BF02649107},
       URL = {https://doi.org/10.1007/BF02649107},
}

@book {sawano2018theory,
    AUTHOR = {Sawano, Yoshihiro},
     TITLE = {Theory of {B}esov spaces},
    SERIES = {Developments in Mathematics},
    VOLUME = {56},
 PUBLISHER = {Springer, Singapore},
      YEAR = {2018},
     PAGES = {xxiii+945},
      ISBN = {978-981-13-0835-2; 978-981-13-0836-9},
   MRCLASS = {46-02 (35A20 35B65 35S05 42B35 46E30)},
  MRNUMBER = {3839617},
MRREVIEWER = {Dorothee\ D.\ Haroske},
       DOI = {10.1007/978-981-13-0836-9},
       URL = {https://doi.org/10.1007/978-981-13-0836-9},
}
%\bibliography{main.bbl}

\end{document}